\documentclass[12pt, leqno]{amsart}

\usepackage{amssymb}
\usepackage{amsmath}
\usepackage{amscd}
\usepackage{float}
\usepackage{graphicx}
\usepackage{amsfonts}
\usepackage{color}
\usepackage{xypic}

\usepackage{amsfonts}
\usepackage[cmtip,arrow]{xy}
\usepackage{cancel}
\usepackage[normalem]{ulem}
\setlength{\textwidth}{16cm}
\setlength{\oddsidemargin}{-0.1cm}
\setlength{\evensidemargin}{-0.1cm}

\newtheorem{theorem}{Theorem}

\newtheorem{corollary}{Corollary}
\newtheorem{lemma}{Lemma}
\newtheorem{proposition}{Proposition}
\newtheorem{definition}{Definition}
\newtheorem{remark}{Remark}

\def\Z{\Bbb{Z}}
\def\x{\texttt{x}}
\def\F{\mathcal F}

\def\PK{L}

\def\L{\mathcal{L}}
\def\LL{\mathsf{L}}

\def\con{\sqcup}

\def\bra{ \rangle \rangle }
\def\bla{\langle \langle}
 \def\brla{\langle \langle \quad  \rangle \rangle }
\def\J{\mathcal{J}}

\def\tcon{ \ \widetilde{\sqcup} \ }
\def\F{\mathcal F}

\def\L{\mathcal{L}}
\def\K{\mathcal{K}}
\def\W{\mathcal{W}}
\def\f{\mathtt{f}}
\def\OO{\put(8,4){\circle{8}}\quad\,}

 \begin{document}

\title{Kauffman type invariants for tied links }

\author{Francesca Aicardi}
 \address{ICTP,  Strada  Costiera  11,   34151  Trieste, Italy.}
 \email{faicardi@ictp.it}
 \author{Jes\'us Juyumaya}
 \address{Instituto de Matem\'{a}ticas, Universidad de Valpara\'{i}so,
 Gran Breta\~{n}a 1111, Valpara\'{i}so, Chile.}
 \email{juyumaya@gmail.com}

\keywords{Kauffman polynomial, BMW algebra, Jones polynomial, tied links}
\thanks{
The  authors has been supported partially by Fondecyt 1141254}

\subjclass{57M25, 20C08, 20F36}

\date{}

\begin{abstract} We define  two new invariants for tied links. One of
them can be thought  as an extension of the Kauffman polynomial and the
other one as an extension  of the Jones polynomial which is constructed
via a   bracket polynomial for tied links.  These invariants  are   more
powerful than both the Kauffman   and the  bracket polynomials when
evaluated on  classical links.  Further,   the extension of the Kauffman
  polynomial is more powerful of the Homflypt polynomial, as well  as of
certain new invariants introduced recently.  Also we propose a new algebra which
plays  in  the  case of  tied  links the same role as   the  BMW algebra
for  the Kauffman  polynomial  in  the  classical  case.   Moreover,  we
prove  that the  Markov  trace on this  new  algebra  can be recovered
from the extension of  the Kauffman  polynomial defined  here.

\end{abstract}

\maketitle
\section{Introduction}

The tied links constitute a class of knot--like objects, introduced by
the authors in \cite{aijuJKTR1}, which contains the classical links.
The original motivation to introduce these objects arose from the
diagrammatical interpretation of the defining generators  of the
so--called  algebra of braids and ties or simply bt--algebra, see
\cite{aijuICTP1,aijuMMJ1,aijuJKTR1}.

\smallbreak

    Tied links  are no  other than classical  links  whose  set  of
components  is partitioned  into  subsets: two  components  connected  by one or  more   ties belong to  the same  subset  of  the  partition.

A tied link diagram is like the diagram of a link, provided with
ties, depicted as springs connecting pairs of points lying on the curves.
 Classical  links   can  be  considered either  tied  links   with
no  ties  between  different  components (i.e., each  subset of  the
partition contains  one  component)   or tied  links whose components
are  all  tied  together.  Of  course,  classical  knots  coincide  in
 both  cases with  tied  knots.

\smallbreak

In \cite{aijuJKTR1} an invariant for  tied  links, denoted
${\mathcal F}$, is  defined by  skein  relations.   This  invariant
can  be regarded   as  an  extension  of the   Homflypt polynomial,
since it  coincides  with  the Homflypt polynomial  when  evaluated on
knots and  classical links, provided that  they  are considered as
tied  links with all  components  tied  together.

\smallbreak

Notice  that  tied links play an  important role  also  in  the
definition   of  an  invariant  $\Theta$  for  classical  links  which
is  a  generalization of   certain  invariants  derived  from  the
Yokonuma--Hecke algebra  \cite{chljukala}. The  invariant $\Theta$
on links  is  more  powerful  of  the Homflypt polynomial, for details
see \cite[Section 8]{chljukala}.

Also  notice that     the  invariant  $\F$  provides, too,   an
invariant of links more  powerful  than  the  Homflypt  polynomial,
when  evaluated  on  tied  links  without  ties \cite{Aic2}.

\smallbreak

  The invariant  $\F$ can be also constructed through the Jones
recipe \footnote{With this we refer to the mechanism firstly conceived  by V.
 Jones in \cite{joAM} for the construction  of the Homflypt polynomial.}. In fact, we  have proved  that  the  bt--algebra
supports  a  Markov  trace     \cite{aijuMMJ1},  and  that  $\F$  can
be  obtained  as  well  by  the  Jones  recipe applied to the
bt--algebra.  To do this,  we   have introduced the
  algebraic counterpart of  the  braid  group  for tied links, that is the tied  braids
monoid,   and  we have proved  the analogous of   Alexander  and  Markov
theorems  for  tied  links; for details see   \cite{aijuJKTR1}.

\smallbreak

Since  the invariant $\mathcal{F}$ can be  thought as the
Homflypt polynomial for tied links, it is quite  natural  the
question  whether  a generalization  of  the Kauffman  polynomial
  \cite{kaTAMS} can be defined for tied  links.  This paper
proposes and studies  a Kauffman polynomial for unoriented (respectively
oriented)   tied links, denoted by $\L$ (respectively, denoted by
$\widehat{\L}$). We  also define  a sort of Jones polynomial for tied
links,  and  we construct
$\widehat{\L}$ through the Jones recipe applied to a  suitable \lq tied
BMW algebra\rq.  Finally,  using  data from \cite{link},  we
show  pairs  of non--equivalent oriented links which are not
distinguished by  the Hompflypt polynomial, nor by the Kauffman
polynomial, but that are distinguished by $\widehat\L$; moreover the invariant
$\L$ distinguishes  pairs  of oriented links that are not distinguished
by the invariant $\F$ and $\Theta$.

\smallbreak

This  paper is organized as  follows.   Section 2
is dedicated to give the background and notation used in the
paper. In  Section 3,
Theorem \ref{TH1}  proves  the existence of the polynomial $\L$. This
polynomial  is  a three variable invariant obtained by  modifying
suitably  the  Kauffman   skein  relations  \cite[Definition
2.2]{kaTAMS}, that define the Kauffman polynomial $L$ for unoriented
links.    The  polynomial  $\L$ coincides with the polynomial $L$  on
 links  whose  components  are all  tied  together; in particular, the
polynomials $\L$ and $L$  coincide on knots. Moreover,  in
Theorem \ref{TH2} we give the invariant of oriented tied links
$\widehat{\L}$ associated to $\L$. This  is  done by using  the  same     normalization originally  used
  to define the Kauffman polynomial for oriented links \cite[Lemma 2.1]{kaTAMS},
denoted by  $\widehat{L}$, associated to $L$.

\smallbreak

  Section 4 is devoted to define a bracket polynomial $\brla$
for tied links.  In  Proposition \ref{prop2}  we  prove  that
there  exists  a two variable  generalization $\langle\langle \quad
\rangle\rangle$ for  unoriented tied  links of  the one  variable
bracket polynomial \cite{kaTOP}. The polynomial  $\brla$ becomes  the
bracket  polynomial  on  links  whose  components  are all  tied
together,  and  then  coincides  with  the  bracket  polynomial   on
knots. We note  that  $\brla$    results to  be  a  specialization  of
 $\L$. Further,   we obtain a
generalization  of  the  Jones  polynomial   for  tied links,  see
Corollary \ref{cor1}.

\smallbreak

  We start  Section 5  by introducing  a sort of  tied BMW  algebra
with the aim of recovering the invariant $\widehat{\L}$ via the Jones
recipe.  In  Section \ref{sec51}   this tied BMW
algebra,  called  t--BMW algebra,    is defined by generators
and relations; more precisely,   the defining generators    are of  four  types:    usual braid generators, tangle
generators, tied generators and a new class of objects called
tied--tangles generators. The defining relations of the  t--BMW
algebra are  chosen   to  fulfill the same  monomial relations  of the
  bt--algebra \cite{aijuJKTR1, aijuMMJ1},    the monomial relations
of  the  BMW algebra, together   with  a  suitable tied  version of
all  defining  relations  of  the  BMW algebra. In  Section
\ref{sec52} we show that the  diagrammatical  interpretations of  the
defining    generators of  the  t--BMW algebra    agrees with
  the  defining  relations of  it. In Proposition
\ref{finitedimensional}, we  prove  that the  t--BMW  algebra  is
finite  dimensional,   by  showing  that  every element  in it can  be
 put in  a certain {\it  reduced form}. This  result,  together  with
the  existence  of  the  invariant  $\L$, allows  us to  prove  that
the  t--BMW algebra  supports  a Markov  trace,  that  we  denote  by
$\varpi$.  This  trace is   in  fact  similar, cf. (\ref{phi}),  to
   the  Markov  trace   $\tau^{\prime}$  on  the
BMW algebra  by Birman  and Wenzl
\cite{biweTAMS}; thus,    we  obtain     $\widehat{\L}$ by the Jones   recipe. We finish Section 6 by re--proving the fact that
$\widehat{L}$ can be obtained as a \lq specialization\rq\ of
$\widehat{\L}$, see Proposition \ref{kauffanasL}. The proof of this
proposition is completely algebraic and uses the natural  homomorphism
from the t--BMW algebra  onto the BMW algebra and the respective
factorization of the trace $\varpi$ by the trace $\tau^{\prime}$, for
details  see Proposition \ref{tracetBMWwithBMW}.

   \smallbreak

   A motivation to construct   $\L$    was   the
hope  of finding  an invariant    more  powerful  than  the  Kauffman  polynomial
when  calculated   on classical links (tied links without ties); in fact we have got that;  surprisingly,  the
   polynomial  $\brla$, too,     is more powerful  than  the
 Kauffman  polynomial.   In   Section 7   we  give
an  example of  pairs  of  non  isotopic  oriented links distinguished
 by     both normalizations  of   $\L$   and  $\brla$,  which are  not
 distinguished    by  the   Homflypt polynomial  nor  by the  Kauffman
polynomial. Moreover, the invariant $\L$ distinguishes pairs which are
not distinguished  by the invariants  $\F$ and $\Theta$.

\section{  Notations  and  background}

\subsection{}{\it Tied Links.}
Recall a  tied link is a link whose components may be connected by ties.   In fact,  classical  links  form a  subset of  the set of  tied links.   The  ties of  a tied link  define  a  partition  of  the  set  of  its components: two  components  connected by one or  more  ties  belong  to  the  same  set of the partition. Two  tied  links  are  tie--isotopic  if  they  are  ambient  isotopic  and if  the  ties define  the  same  partition  of the  set  of  components.

   A  tie is  depicted  as  a spring  connecting  two points  of  a link.  However,  a tie  is  not a  topological entity: arcs  and  other  ties can   cross  through  it.   The tie--isotopy says that, when remaining in the same equivalence class, it is allowed to move any tie between two components letting its extremes move along the two whole
components;    moreover, ties can be destroyed or created between two components, provided that
these components either coincide, or belong to the same class.
\smallbreak

Two components will be said tied
together, if they belong to the same subset, i.e.,
if between them a tie already exists or a tie can be created.

A  tie  that  cannot  be  destroyed  without  changing  the  tie--isotopy class  is  said  {\it  essential}.  Any  tie connecting  two  points  of  the  same  component  is  not  essential.

Here  we  consider  diagrams of  unoriented  tied  links  in  $S^2$.  Tie--regular  isotopic  tied links diagrams  are  evidently regular  isotopic  links   having  ties    that   define  the  same  partition of  the  set  of  components.
\smallbreak

 Two diagrams of tied  links are said to  be regularly isotopic  if one of them can be carried in the other by using the Reidemeister moves II and/or III, as in the  classical  case.

 \smallbreak

 In  what  follows  the  tie--isotopy  class of  a tied link     will  be  not  distinguished  from   the  class of  its  diagrams.

 \subsection{}
Let  $D$   be  an  unoriented  tied  link  diagram,  and  let $\put(6,4){\circle{8}} \quad$ be  the  zero crossing diagram of the  unknotted circle.
We indicate by $D \con \put(6,4){\circle{8}} \quad $    the disjoint union  of $D$  with  $\put(6,4){\circle{8}} \quad $, and
  by $D \tcon \put(8,4){\circle{8}} \quad $    the  disjoint  union of $D$  with  $\put(6,4){\circle{8}} \quad $,  but  in  this  case  there  is  a  tie between $D$  and  $\put(6,4){\circle{8}} \quad $.

Moreover, we  indicate  by   $D_+$, $D_-$, $D_e$  and  $D_f$   four unoriented tied link  diagrams  that are  identical  except  for  a  disk in  which  they  look, respectively, as  $ \includegraphics[scale=0.5,trim=0 0.4cm 0 0]{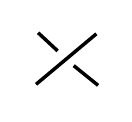}$,   $\includegraphics[scale=0.5,trim=0 0.4cm 0 0]{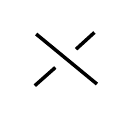}$,
  $\includegraphics[scale=0.5,trim=0 0.4cm 0 0]{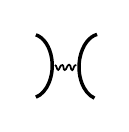}$,  and $\includegraphics[scale=0.5,trim=0 0.4cm 0 0]{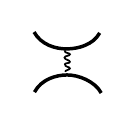} $. The  last two  diagrams without  ties in  the  selected  disc are  indicated  by  $D_0$  and  $D_\infty$.

Let   $\overrightarrow{D}$  be  an oriented  tied link  diagrams.  We  denote  by  $D$ the  unoriented tied  link  associated  to  $\overrightarrow{D}$ and  we  denote  by $w(D)$  the  writhe of $\overrightarrow{D}$.

\subsection{}
  Let now $\overrightarrow{D}$  be  a tied link   whose components are all
tied together. Any tied link    tie--isotopic to  $\overrightarrow{D}$ has the
components all tied, too; hence,  the tie--isotopy of tied links having
all components tied together depends only of the isotopy in $S^2$. Therefore, the
classical links are in topological bijection with the   tied links
having  all components   tied together;    we shall denote   by
 $ \overrightarrow{D}^\nsim$   the classical link  obtained by
forgetting all ties in
  $ \overrightarrow{D}$.

Recall that every tied link can be obtained as the closure of a tied braid, see \cite[Theorem 3.5]{aijuJKTR1}. The set of tied braids with $n$ strands forms a monoid, denoted $TB_n$, which has a  presentation  with  usual braids $\sigma_1,\ldots , \sigma_{n-1}$ and  ties generators $\eta_{1}, \ldots , \eta_{n-1}$ and certain relations, for details see \cite[Definition 3.1]{aijuJKTR1}. The tied monoid $TB_n$ is related to the set of the tied links as the braid group $B_n$   to the classical links, see \cite{aijuJKTR1}. In this last paper was also introduced the  exponent of a tied braid $\theta \in TB_n$, denoted ${\rm exp}(\theta)$. More precisely, if $\theta$  is the product $\theta_1\cdots\theta_k$ in the defining generators of $TB_n$, then
$$
{\rm exp}(\theta) = \sum_{i=1}^{r}k_i,
$$
where $k_i=\pm 1$ if $\theta_i=\sigma_i^{\pm 1}$ and $k_i=0$ if $\theta_i=\eta_i$. Notice that if  $\overrightarrow{D}$ is the oriented tied link obtained by closing the tied braid $\theta$, then
$$
w(D) = {\rm exp}(\theta).
$$

Further, we define $EB_n$ as the subset of $TB_n$ formed by the tied braids of the form:
$$
\eta^n \sigma\quad \text{where}\quad \eta^n:= \eta_1\cdots \eta_{n-1} \quad \text{and}\quad \sigma\in B_n.
$$
The closures of the tied braids of $EB_n$ correspond to    tied links in which the components are all tied together. Notice that
$$
{\rm exp} (\eta^n \sigma) = {\rm exp} (\sigma) \quad \text{where}\quad \sigma\in B_n.
$$
We observe that the monoid $EB_n$ is in fact a group, with identity $\eta^n$, which is naturally isomorphic to the braid group $B_n$; we shall call $\f_n$ this natural group isomorphism,
\begin{equation}\label{f}
\f_n:\eta^n\sigma\mapsto \sigma.
\end{equation}
  Hence,  there  is  a  braid--identification of  the  set  of  tied  links  whose  components  are  all  tied  together  with the  set  of  classical  links.
  Thus, for  $\overrightarrow{D}= \widehat{\eta^n\sigma}$, we have:
$$
\overrightarrow{D}^\nsim= \widehat{\sigma},
$$
  where $\widehat{\sigma}$  is  the  closure  of  $\sigma$.

\subsection{}

Let $K$ be a field. The expression $A$ is a $K$--algebra means that $A$ is an associative  $K$--algebra   with unity equal to $1$;  so, we consider $K$  as  contained in the center of $A$.

  Along  this  paper,  $a, x$ and $z$  indicate  three  commutative   variables.

\section{ Kauffman  polynomial for  tied links}

\subsection{\it  Definition of  $\L$}



\begin{theorem}\label{TH1}  There  exists a unique function
 $$ \L   :  \{ \text{Unoriented  tied  links diagrams } \} \rightarrow \Z[a^{\pm 1}, z^{\pm 1},x^{-1} ]$$
 that  is  defined     by  the  following rules:
 \begin{itemize}
 \item[(i)]  $\L(\put(6,4){\circle{8}} \quad )=1$,

 \item[(ii)] $\L(D \con  \put(6,4){\circle{8}} \quad )= x^{-1} \L(D)$,

 \item[(iii)] $\L$  is  invariant under  Reidemeister  moves  II  and  III,

 \item[(iv)]  $\L (\includegraphics[scale=0.5,trim=0 0.4cm 0 0]{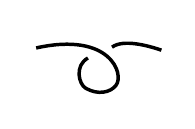})= a  \L (\includegraphics[scale=0.5,trim=0 0  0 0]{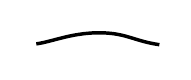})$,

 \item[(v)] $\L (\includegraphics[scale=0.5,trim=0 0.4cm 0 0]{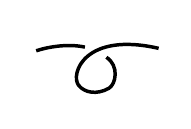})= a^{-1} \L (\includegraphics[scale=0.5 ]{loopno.pdf})$,

 \item[(vi)]  $ \L (\includegraphics[scale=0.5,trim=0 0.4cm 0 0]{crossp.pdf}) +  \L (\includegraphics[scale=0.5,trim=0 0.4cm 0 0]{crossm.pdf})= z\left(
  \L (\includegraphics[scale=0.5,trim=0 0.4cm 0 0]{tanver.pdf}) + \L (\includegraphics[scale=0.5,trim=0 0.4cm 0 0]{tanor.pdf}) \right), $
 \end{itemize}
 where,  as  usual,  $\L (\includegraphics[scale=0.5,trim=0 0.4cm 0 0]{crossp.pdf})$, $\L (\includegraphics[scale=0.5,trim=0 0.4cm 0 0]{crossm.pdf})$,  etc.,
  indicate  the value of    $\L$   on  diagrams  that  are  identical  except  for  a  disc inside  which they  look  like  the  pictures  in parentheses.
\end{theorem}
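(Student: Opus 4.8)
The plan is to prove uniqueness first---showing the six rules \emph{force} the value of $\L$ on every diagram---and then to prove existence by running the same recursion as a definition and checking that its output is independent of all the choices made. The conceptual point that makes the statement plausible is that, unlike in the classical Kauffman polynomial where the value of a disjoint circle is forced to equal $\delta:=(a+a^{-1})z^{-1}-1$, here the two smoothings appearing in rule (vi) both carry a tie; consequently the computation that normally pins down the disjoint--circle value produces a \emph{tied} circle rather than an untied one, so the untied--circle value $x^{-1}$ of rule (ii) is left free. This is precisely why $\L$ can depend on a genuinely new variable $x$.

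For uniqueness I would argue by induction on the number of crossings of a diagram $D$. In the base case $D$ is a disjoint union of circles distributed among the tie--classes of its partition. Rules (i) and (ii) evaluate the untied circles, and the value of a circle tied to the rest of the diagram is \emph{forced}: placing a curl on a chosen component, evaluating it by (iv) and by (v), and then applying (vi) to the crossing of the curl yields
$$a\,\L(D)+a^{-1}\L(D)=z\bigl(\L(D)+\L(D\tcon \put(6,4){\circle{8}} \quad )\bigr),$$
so that $\L(D\tcon \put(6,4){\circle{8}} \quad )=\delta\,\L(D)$ with $\delta=(a+a^{-1})z^{-1}-1$. Hence every zero--crossing diagram has a value determined by (i), (ii) and this relation, namely a monomial in $x^{-1}$ and $\delta$ read off from the partition. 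For the inductive step one uses that rule (vi) rewrites $\L(D_+)$ as $z(\L(D_e)+\L(D_f))-\L(D_-)$, where $D_e,D_f$ have one crossing fewer while $D_-$ has the same number of crossings but a single switched one. Ordering the crossings so that successive switches turn $D$ into an \emph{ascending} diagram---which reduces, by rules (iii)--(v), to a zero--crossing diagram---a secondary induction on the number of crossings still to be switched shows that the value of $D$ is completely determined.

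Existence is then proved by taking this recursion as the \emph{definition} of $\L$ and verifying that its output does not depend on the choices involved (the base points and component ordering used to define \lq ascending\rq, and the order in which crossings are switched). As in Kauffman's original argument for $L$, this reduces to a finite list of local consistency checks: invariance of the recursion under Reidemeister moves II and III, the commutation of two crossing switches performed at different crossings, and independence of the chosen base points. To these classical checks one must add the new verifications forced by the ties: that creating or destroying a tie inside a single class, and sliding a tie, leave the value unchanged, and that the \emph{merging} of two tie--classes produced when one smooths a crossing between strands of different classes is handled consistently.

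I expect the main obstacle to be precisely this last point. In the untied theory the compatibility of the skein relation with the Reidemeister moves is already the delicate part of the proof; here every smoothing in (vi) additionally alters the partition (it can only coarsen it, by joining the two local strands), so one must track the evolving tie--classes through the entire reduction and confirm both that the answer lands in $\Z[a^{\pm1},z^{\pm1},x^{-1}]$ and that the only constraints the rules impose among circle values are those producing $\delta$ for tied circles, leaving $x$ unconstrained. A clean organizing principle is to check that the recursively defined $\L$ is invariant under each generator of tie--regular isotopy (the moves II and III together with the tie create/destroy and tie--slide moves); since these generate the equivalence, their verification establishes that $\L$ is well defined and completes the proof. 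I would \emph{not} attempt instead to reduce directly to the classical polynomial $L$, because the class--merging caused by smoothings means that $\L(D)$ is not a simple substitution into $L(D^\nsim)$ except in the split case, and the general relationship is exactly the state sum that the recursion already encodes.
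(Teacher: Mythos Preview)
Your proposal is correct and takes essentially the same approach as the paper: induction on the number of crossings via ascending diagrams in the style of Lickorish's proof of Kauffman's theorem, with the tied--circle value $\delta=(a+a^{-1})z^{-1}-1$ derived from rules (iv)--(vi) as a preliminary, followed by the standard checks of independence from base points, orderings, and Reidemeister II/III. The paper intertwines uniqueness and existence in a single induction rather than separating them, and handles your extra ``tie--specific'' verifications implicitly---since a tie is not a topological datum but merely a record of the partition, the create/destroy and slide moves are automatic once the recursion is seen to depend only on the partition, so those checks cost nothing.
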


\begin{remark} \rm The  analogous  of  relation (ii)  in  presence  of  a tie   follows from   (iv), (v) and (vi):
\begin{equation}\label{tcon}   \L( D \tcon \put(8,4){\circle{8}} \quad )= y \L(D), \end{equation}
 where    \begin{equation}\label{y} y:=(a+a^{-1})z^{-1}-1. \end{equation}
\end{remark}

\smallbreak

 Following  exactly  the  same  arguments  as  in  \cite{kaTOP},  we  can  deduce  from  $\L$  a  tie--isotopic   invariant  of  oriented  links,  as the  next  theorem states.

  \begin{theorem}\label{TH2}  Let $\overrightarrow{D}$  be  an oriented   diagram  of  a  tied link.  Then  the  map $\widehat \L$  defined  by

 \begin{equation}\label{Lor}  \overrightarrow{D} \mapsto a^{-w(D)}  \L(D)\end{equation} defines   an  ambient tie--isotopy  invariant  for the tied  links.
 \end{theorem}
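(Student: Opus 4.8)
The plan is to show that the map $\widehat{\L}$, a priori defined only on oriented tied link diagrams, is invariant under the three oriented Reidemeister moves together with the tie--isotopy moves; since these generate ambient tie--isotopy, this establishes the theorem. The tie--isotopy moves (sliding a tie, and creating or destroying a tie between components lying in the same class) act only on the combinatorial tie data, and this data has already been incorporated into the definition of $\L$ in Theorem \ref{TH1}. Moreover such moves leave both the underlying unoriented diagram $D$ and its writhe $w(D)$ untouched, so $\widehat{\L}(\overrightarrow{D}) = a^{-w(D)}\L(D)$ is automatically invariant under them. It therefore suffices to treat the Reidemeister moves I, II and III.

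For moves II and III I would invoke two standard facts. First, the writhe $w(D)$, being the sum of the signs of the crossings of $\overrightarrow{D}$, is unchanged under Reidemeister II and III: move II removes or inserts a pair of crossings of opposite sign, while move III permutes crossings without altering their individual signs. Hence $a^{-w(D)}$ is invariant under these two moves. Second, $\L(D)$ is invariant under moves II and III by rule (iii) of Theorem \ref{TH1}. Consequently the product $\widehat{\L}(\overrightarrow{D})$ is invariant under moves II and III.

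The essential case is Reidemeister I, and here the normalization factor does its work. A positive curl increases the writhe by one, $w(D)\mapsto w(D)+1$, while by rule (iv) it multiplies $\L$ by $a$; thus
\begin{equation*}
\widehat{\L}\ \mapsto\ a^{-(w(D)+1)}\,a\,\L(D) = a^{-w(D)}\L(D) = \widehat{\L},
\end{equation*}
so $\widehat{\L}$ is unchanged. Symmetrically, a negative curl sends $w(D)\mapsto w(D)-1$ and, by rule (v), multiplies $\L$ by $a^{-1}$, yielding again $a^{-(w(D)-1)}a^{-1}\L(D)=\widehat{\L}$. The same computation applies when the local curl carries a tie, because rules (iv) and (v) hold for $\L$ on an arbitrary tied diagram containing such a local picture. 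This gives invariance under move I, and hence under all oriented Reidemeister moves, so $\widehat{\L}$ descends to a well--defined ambient tie--isotopy invariant of oriented tied links taking values in $\Z[a^{\pm 1},z^{\pm 1},x^{-1}]$.

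The one point requiring care --- and the main thing I would pin down --- is the bookkeeping of signs: one must match the two curls appearing in rules (iv) and (v) with the two possible signs of the crossing they create, and check that this crossing sign is independent of the orientation chosen on the strand (it is, since reversing the orientation of the single strand forming a curl reverses both the over-- and the under--arc and therefore preserves the sign). This guarantees simultaneously that $w(D)$ is well defined on $\overrightarrow{D}$ and that its increments $\pm 1$ under move I are exactly cancelled by the factors $a^{\pm 1}$ produced by $\L$. Once these conventions are fixed, the argument is, as noted, formally identical to Kauffman's passage from the regular--isotopy invariant to its writhe--normalized ambient--isotopy refinement.
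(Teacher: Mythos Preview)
Your proposal is correct and follows precisely the approach the paper indicates: the paper does not spell out a proof of Theorem~\ref{TH2} at all, merely remarking that one follows ``exactly the same arguments as in \cite{kaTOP}'', and what you have written is exactly that standard Kauffman writhe--normalization argument, adapted to include the (trivially handled) tie moves.
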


\subsection{{\it Some  properties of  $\L$}}
\begin{enumerate}

\item Observe  that  skein  rule  (vi)  is  symmetrical  for  the  exchange  of  the  left  terms (the  respective  diagrams  are also  denoted  by  $D_+$  and  $D_-$),
as well  as for  the  exchange of  the  right  terms. We will denote  the  respective diagrams  of  the right terms  by  $D_{e}$  and  $D_{f}$  (the  subscripts   $e$ and  $f$  are  motivated  form  the  t--BMW algebra, see Section \ref{tBMW})
in order  to  distinguish them  from  the  corresponding  classical  counterpart,  without  ties,  that  are often denoted  respectively  $D_0$  and  $D_\infty$.
Secondly,  observe  that   if  the  two  strands  involved  in  the  crossings of  $D_+$  and  $D_-$ belong  to  two  untied  components,  then  these  component  merge
   in  a  sole  component  of  both diagrams $D_{e}$  and  $D_{f}$.  In  this  case  the ties  in  $D_{e}$  and  $D_{f}$   are   not  essential.  On  the  other  hand,
    if   the  two  strands  involved  in  the  crossings of  $D_+$  and  $D_-$ belong  to  the same  component,  then  they belong  to  two  different components  in
       $D_{e}$  or in $D_{f}$,  that  are  tied  together.
Therefore,  we  have  not  a  skein  rule  involving  $D_0$  and  $D_\infty$,  when  the  strands  belong  to  two untied  components. This  is  the  reason of  the  necessity  of  rule (ii).

\item
We  outline  the  fact  that  our  invariant  $\L=\L(a,z,x)$  is  a  generalization  of   the  Kauffman  invariant  of  unoriented links $\PK=\PK(a,z)$ \cite{kaTAMS}, in  the  sense  that  it is  reduced to  $\PK$   on   classical  unoriented links,  provided  that  they  are  considered  as unoriented  tied links,  whose     components  are  all tied  together; indeed,  observe  that    the classical  Kauffman skein  rule and    relation (vi)  in  this  case coincide (see  also  Section \ref{subsec6}). In  particular,  $\L$  coincides  with  $\PK$ on  knots,  see  examples  in  Section \ref{app}.

\item The symmetry properties  of  $\L$  are  inherited  from  those  of $\PK$.  For  instance,   the   values  of  the polynomial $\L$  on  two  tied link diagrams,  which  are  one  the  mirror  image  of  the  other   coincides  under  the  change of $a$  by  $1/a$.
\end{enumerate}

\begin{proof}[Proof of Theorem \ref{TH1}] The proof is   by  induction on  the  number of crossings and  follows the
 proof  done by Lickorish \cite[page 174]{Lic}  of  the analogous Kauffman's theorem  for classical  links;  we  have just  to pay  attention  to  the points  where  the  presence  of  ties  intervenes  along  the  demonstration.

We suppose  by induction  that  the  function  $\L$  has  been  defined on all the
unoriented links  diagrams  with  at  most  $n$  crossings, i.e.,  $\L$  satisfies  rules (i)--(vi),
provided  that   the Reidemeister moves  do  not  increase  the  number  of  crossings  beyond $n$.

Firstly,  observe  that  for  every  link diagram $D$  with  $n$  crossings,
the  diagrams $D\con  \put(6,4){\circle{8}} \quad$  and
 $D\tcon \put(8,4){\circle{8}} \quad$  have $n$  crossings  and, by  (ii), and  (\ref{tcon}) we  get
 \begin{equation}\label{con} \L(D \con  \put(6,4){\circle{8}} \quad)= x^{-1}\L(D)  \quad \text{and} \quad
   \L( D \tcon \put(8,4){\circle{8}} \quad )= y \L(D). \end{equation}

It  follows  that  if  ${\put(8,4){\circle{8}}\quad\,}^{c,t}$  is  the diagram of  the unlink  with  $c$ unknotted
components without crossings,
and  $t\le c-1$  essential  ties,  then
\begin{equation}\label{Oct}  \L({\put(8,4){\circle{8}}\quad\,}^{c,t})=  y^t / x^{c-t-1}. \end{equation}
Secondly, denoting by  ${\put(8,4){\circle{8}}\quad\,}_n^{c,t}$  a  diagram  of  the same  unlink but  having $n$  crossing,
it  follows  from  rules (iii)--(v)  that
\begin{equation}\label{Octn}  \L({\put(8,4){\circle{8}}\quad\,}_n^{c,t})= a^{\overline w} \L({\put(8,4){\circle{8}}\quad\,}^{c,t}), \end{equation}
where $\overline w$  is the    sum  of  the  writhes of  all  components of ${\put(8,4){\circle{8}}\quad\,}_n^{c,t}$.

Now, suppose  that $D$ is the  diagram of   a tied link    with $c$ components,  $t$  essential  ties
   and   $n+1$
crossing.

If  $D$ is of type ${\put(8,4){\circle{8}}\quad\,}_{n+1}^{c,t}$, i.e., $D$ is  a    $n+1$  crossings  diagram of the  unlink with $c$ unknotted
 components and  $t$  essential  ties,  then    we  still   assume  valid   (\ref{Oct}) and  (\ref{Octn})  so  that  the  value  of $\L$ on $D$  is   defined  by

\begin{equation}\label{Ku}   \L(D)= a^{\overline w(D)}y^t / x^{c-t-1}.  \end{equation}

Otherwise,   by  changing some  undercrossings  to overcrossings we construct   an associated ascending diagram $\alpha D$   with  $c$ component
unknotted  and  unlinked,  having $t$  essential  ties and  $n+1$ crossings.   To  do  this,
it  is  necessary to  provide  the  components of $D$ with  an  order, and
then   choose  a  base point and  an  orientation on  each  component.   In  this  way,
an ordered sequence
of $m\le n$  {\it deciding  points} is  defined,  i.e.,  crossings  where  the  diagrams
$D$  and  $\alpha D$  differ.

In order  to compute $\L(D)$,  we start
  from  the first deciding  point, say $\x_1$,  and  use
skein  relation  (vi). Observe  that if the  diagram   $D^{(1)}:=D$  around $\x_1$  coincides  with $D_+$ (or $D_-$)  then
  $D_-$  (resp. $D_+$) has  $m-1$  deciding  points.  Therefore, $\L(D)$  is  written  in
  terms  of the  value of  $\L$  on  a  diagram  with  $n$ crossings  and $m-1$  deciding  points,
  and  of
  $\L(D_{e} )$  and   $\L(D_{f} )$.  These  last  two  values  are  well  defined
   by    induction, since $D_{e}$  and   $D_{f}$  have $n$  crossings.
   Therefore, it  remains  to  calculate  $\L$  on  $D_-$ (or  on  $D_+$).  Call this diagram $D^{(2)}$  and  apply
    relation (vi)  to the  second deciding  point $\x_2$,   and  so  on,  until,  at  the  last
   deciding  point, $\x_m$, $D_-^{(m)}$ (or  $D_+^{(m)}$) coincides  with  $\alpha D$, i.e., is  a  diagram of  type  $ {\put(8,4){\circle{8}}\quad\,}_{n+1}^{c,t}$,  for  which
    $\L$  is  given  by (\ref{Ku}).

Now,  we  have  to  prove  that  $\L(D)$ does not  depend  on  the  construction  of
$\alpha D$, i.e.,  does  not  depend on  the  component order,  component orientation
and choice of base points.  The  proof  for  tied links is  identical to  that   for
classical  links,  since  the  presence  of  ties  does  not prevent  any  step of  the
arguments (see \cite[Theorem 15.5]{Lic}).

Since  by  a  suitable  choice of  the  base  point and  of  the  orientation and of  the  component order,  any  crossing of  $D$  can  be  a deciding crossing,
 it  follows  that   $\L(D)$   satisfies (vi)
for  every  crossing of the  diagram $D$. Thus, it
   remains  to  prove  that  $\L$ satisfies  (iv) and (v) for  every diagram $D$  with  $n+1$ crossings,  and    that  $\L$  is
invariant under     Reidemeister moves II  and  III,  never
involving more  than $n+1$  crossings.

To prove  rules   (iv) and (v), observe that they  are  satisfied   when $D$ is  of  type ${\put(8,4){\circle{8}}\quad\,}^{c,t}_{n+1}$.
Moreover,  for  every  loop  of type
 $\includegraphics[scale=0.5,trim=0 0.4cm 0 0]{loop1.pdf}$
 or   $\includegraphics[scale=0.5,trim=0 0.4cm 0 0]{loop2.pdf}$  present  in  a  component of  $D$,    it is  always
 possible  to  choose the  base  point on  that  component  so  that  the crossing of  the loop
 is  not a deciding point. Therefore  the same loop is  present  also  in $\alpha D$,  and,
 denoting by  $D'$  the diagram  obtained  from  $D$  by  removing the  loop, we get  by  definition  $\L(\alpha D)= a^{\pm 1} \L(\alpha D')$,  comparing  (\ref{Octn}) and with (\ref{Ku}).  Since  the  factor
 $a^{\pm 1}$ persists  along the  calculation of $\L(D)$, it  follows  that  $\L(D)=a^{\pm 1} \L(D')$, i.e.,
 rules (iv)  and  (v) hold  for  diagrams  with $n+1$ crossings.

Consider  now  the  invariance under  the  Reidemeister move   II.  Firstly,  observe  that  if  $D$  is  of  the  type ${\put(8,4){\circle{8}}\quad\,}_{n+1}^{c,t} $,
then  $\L(D)$ is  invariant  under  such  move; indeed,  if  the  two  strands  involved  in  the  move belong  to  different  components,
their crossings  do  not enter  the  calculation  of  (\ref{Ku}).  On  the other  hand,  if  the  strands  belongs to  the
 same  components,  their  crossings give  opposite  contribution  to  the  writhe  of  that  component,  and  then the  crossings  can  be  destroyed without  effect on $\L$.
  If  $D$  is  not  of  type ${\put(8,4){\circle{8}}\quad\,}_{n+1}^{c,t} $, the  proof proceeds as  follows.
Let  $D_1,D_2,D_3$  and $D_4$  be four  diagrams  with  $n+1$  crossings  that  are  identical  except for a disc in  which  they look
 like the  four fragments  shown in  the  first  line  of Fig. \ref{RIIskein}.  By  applying skein relation (vi)    to  the  top crossing  of  $D_1$, we  get:
$$ \L(D_1)+\L(D_2)= z(\L(D_3)+\L(D_4)).$$   Similarly, let $D'_1,D'_2,D'_3$  and $D'_4$ be  four  diagrams differing  from  the previous  diagrams only  in  the  same  disc  in  which
they  look  as  depicted  in  the  second  line of  the same figure. By  applying skein relation (vi)    to  the  bottom  crossing  of  $D'_1$  we  get:
$$ \L(D'_1)+\L(D'_2)= z(\L(D'_3)+\L(D'_4)).$$  Observe  now  that  $D_2=D'_2$,  $D_3=D'_3$ (for  the  mobility property of  the  tie, see \cite{aijuJKTR1})  and $\L(D_4)=\L(D'_4)$
by rules (iv) and (v) just  proven  (after having  moved  the  ties  far  from  the  disc).  Therefore   $\L(D_1)=\L(D'_1)$.   Finally,  we  observe  that  the  base
points can  be  always  chosen  so   that  $\alpha D_1$ (or  $\alpha D_1'$) is equal  to  $D_1$ (resp, to $D'_1$).  Evidently,  for  (\ref{Ku}),
$\L(\alpha D_1)=\L(\alpha D'_1) =  \L(\alpha D'')$, where the  diagram  $D''$  differs  from  $D$  since  has no  crossings in  the  disc.
In  this  way,  the  calculation  of  $\L(D_1)$  (or  $\L(D'_1)$) does not touch   the  concerned  disc. We  deduce then that  $\L(D_1)=\L(D_1')=\L(D'')$.

The proof  the  $\L$  is  invariant  under    Reidemeister  move  III    is   analogous  to the  corresponding proof for  classical  links (see  \cite[Chapter 15]{Lic}),  still remembering  the  mobility  of  the  ties.

 \begin{figure}
 \centering{ \includegraphics{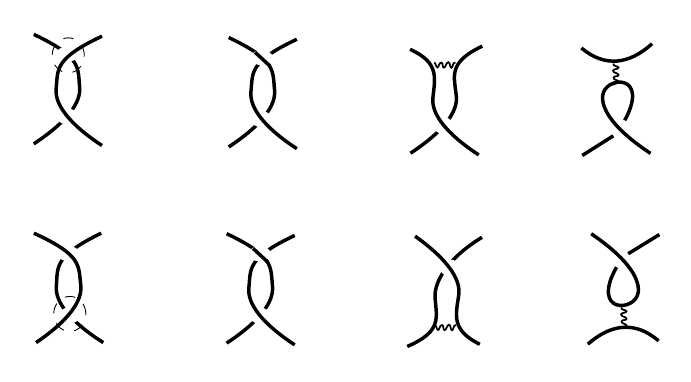} }\caption{Top  line:  Details  of    $D_1,D_2,D_3, D_4$;  bottom line: details  of    $D'_1,D'_2,D'_3, D'_4$. }\label{RIIskein}
 \end{figure}

 \end{proof}

\section{A  bracket  polynomial  for  tied link}

The bracket polynonial is an invariant of regular isotopy for unoriented  links introduced by   Kauffman \cite{kaTOP}. This invariant allows to define the Jones polynomial \cite{joAM} trough a  summation over all state diagrams of the link.   A  similar construction  of the bracket polynomial can  be  done  for tied links, that is, the  definition  is exactly the same  as done by Kauffman if  we  restricts ourselves to crossings of  a  same  component or  between  two  tied components.  For  crossings  of  two  untied  components,  a  new skein relation is  necessary, similar  to  that  used in the  definition  of $\L$.  In this section a Jones  polynomial for  tied  links is  also  proposed.
\smallbreak

Let $A$  and $ c $ be two  commutative independent  variables.
\begin{theorem} \label{prop2}
  There  exists a unique function
$$ \brla  :  \{ \text{Unoriented  tied  links diagrams } \} \rightarrow \Z[A^{\pm 1},c ]$$
   defined  by  the  following  rules:
\begin{enumerate}
\item [(i)] $ \bla \put(6,4){\circle{8}} \quad \bra = 1$,
\item [(ii)] $ \bla \put(6,4){\circle{8}} \quad \con D\bra = c\ \bla \ D \ \bra$,
\item [(iii)] $ \bla \put(6,4){\circle{8}} \quad \tcon D\bra = -(A^2+ A^{-2}) \bla\ D \ \bra $,
\item [(iv)] $\brla$  is  invariant  under  Reidemeister moves  II  and III,
\end{enumerate}
\end{theorem}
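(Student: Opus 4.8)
The plan is to prove existence and uniqueness by rerunning the induction of Theorem~\ref{TH1} on the number of crossings, with the quadratic Kauffman rule (vi) replaced by the \emph{linear} bracket skein rule. Concretely, a crossing whose two strands lie on the same component or on two tied components is resolved by the ordinary Kauffman relation (so that a loop removed inside the disc carries the tied weight $\delta:=-(A^2+A^{-2})$), while a crossing between two untied components is resolved by the tie--smoothed version $\bla D_+\bra = A\,\bla D_e\bra + A^{-1}\bla D_f\bra$, exactly as rule (vi) of $\L$ distinguishes these two situations. The only genuinely new ingredient compared with \cite{kaTOP} is the bookkeeping of ties, which I would isolate at each step.

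First I would record the base case. Iterating (ii) and (iii) shows that a crossingless diagram consisting of $N$ unknotted circles grouped into $m$ tie--clusters is forced by (i)--(iii) to take the value $c^{\,m-1}\delta^{\,N-m}$; this is the analogue of (\ref{Oct}) and fixes $\brla$ on unlinks. For uniqueness I would then argue that the skein rule resolves any chosen crossing into two diagrams with one fewer crossing, so that every tied link diagram reduces to a $\Z[A^{\pm1},c]$--combination of such crossingless tied unlinks. The real content is the \emph{consistency} of this recursion: that the value is independent of the order in which crossings are resolved and of the auxiliary data (component order, base points, orientations) used to build an ascending diagram. Here I would copy the argument of Theorem~\ref{TH1} essentially verbatim, verifying at each step that the presence of ties never blocks a move and invoking the mobility of ties precisely where it was invoked there.

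For existence the cleanest route is a state sum. To each state $s$ (a choice of one of the two smoothings at every crossing) I would attach the crossingless tied unlink $sD$ together with the weight $A^{\,p(s)-q(s)}\,c^{\,m(s)-1}\delta^{\,N(s)-m(s)}$, where $p(s),q(s)$ count the two smoothing types and $N(s),m(s)$ are the numbers of circles and of tie--clusters of $sD$, and set $\brla:=\sum_s(\text{weight of }s)$. This sum is manifestly independent of the order of smoothing, so (i) and the skein rule hold by construction and (ii),(iii) follow from the weight formula; it remains only to check invariance under Reidemeister~II and~III by the standard two--crossing bracket computation. As a consistency check I would observe that under $a=-A^{-3}$, $z=A+A^{-1}$, $x^{-1}=c$ the rules (i)--(iv) of Theorem~\ref{TH1} specialize exactly to (i)--(iv) here --- in particular (\ref{tcon}) and (\ref{y}) give $y=\delta$ --- which simultaneously explains why $\brla$ is a specialization of $\L$ and corroborates the weight formula.

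The step I expect to be the main obstacle is exactly this tie bookkeeping inside the invariance check. One must verify that every smoothing merges two untied components into a single one (so the created tie is non--essential and the cluster count drops) while splitting a single component into two circles that are tied together (an essential tie, keeping them in one cluster), and that this is compatible with the mobility of ties under Reidemeister~II and~III. The classical bracket argument is automatic because there is a single loop value; the whole point here is that the loops produced \emph{inside} the moves are always tied ones, weighted by $\delta$, so that the $A^{2},A^{-2}$ terms cancel as usual, whereas the free variable $c$ records only genuinely disjoint untied components, which no local move can create or destroy. Once this compatibility is pinned down the recursion closes and both existence and uniqueness follow as for Theorem~\ref{TH1}.
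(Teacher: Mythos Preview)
Your proposal is correct and, at its core, follows the paper's own line: the paper's proof of Theorem~\ref{prop2} is very short and essentially says (a) when all components are tied the rules collapse to the classical Kauffman bracket, so \cite[Lemmas 2.3, 2.4, Proposition 2.5]{kaTOP} apply verbatim, and (b) in the presence of untied components the consistency/invariance argument is literally that of Theorem~\ref{TH1} with $z$ replaced by $A+A^{-1}$. Your induction on crossings, with the ascending--diagram bookkeeping and the observation that every loop produced inside an R--II or R--III disc is a \emph{tied} loop weighted by $\delta$, is exactly that argument spelled out.

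Where you genuinely diverge is in offering a closed state--sum formula $\sum_s A^{p(s)-q(s)}c^{\,m(s)-1}\delta^{\,N(s)-m(s)}$ as an independent existence proof. The paper does not do this; it relies entirely on the recursive Theorem~\ref{TH1} machinery. Your state sum is a nicer route for existence (order--independence is automatic, and (i)--(iii) plus the skein rule are immediate), at the cost of having to verify R--II/R--III invariance directly rather than getting it as a by--product of the recursion. Both approaches hinge on the same structural fact you correctly isolate: local moves can only create or destroy \emph{tied} circles, never untied ones, so the free variable $c$ counts a quantity invisible to the local calculus.

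One small slip: in your consistency check the specialization should be $a=-A^{3}$, not $a=-A^{-3}$ (compare rule (iv) of Theorem~\ref{TH1} with the curl behavior $\bla\text{positive curl}\bra=-A^{3}\bla\ \cdot\ \bra$). Your computation $y=\delta$ survives either choice, but the identification with $\L$ does not.
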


  \begin{proof}   If  all  components  are  tied together,  we  can  forget  rules (ii)  and  (vi); further, observe that  the  remaining  rules  coincide  with  the defining  rules  of  the  bracket  polynomial,  where a tie  is  added  between  the   strands.  Then  the  proof   follows    from  \cite[Lemmas 2.3, 2.4 and Proposition 2.5]{kaTOP}.
  Observe also  that,  in  this  case,  item (iv), i.e. the invariance  under  Reidemeister moves II  and  III, can  be  deduced from the other  rules.  In  the absence  of  ties, the invariance  under  Reidemeister  moves II  and  III must  be  stated  and  its consistency     with  the  other  rules must  be  proved.  This  is done  exactly  as  in the    proof  of  Theorem  \ref{TH1}, i.e.,  by  using  skein rule (vi),  where  $z$  is  substituted  by  $(A+A^{-1})$.
\end{proof}

The next proposition, as well as its proof, is analogous to \cite[Proposition
2.5]{kaTOP}.

\begin{proposition}

The  bracket polynomial  $\bla \quad \bra$    satisfies:
\begin{enumerate}
\item[(i)] $\bla \includegraphics[scale=0.5,trim=0 0.4cm 0 0]{loop1.pdf}\bra=
 - A^3\bla \includegraphics[scale=0.5,trim=0 0  0 0]{loopno.pdf}\bra$,
\item[(ii)]
$\bla\includegraphics[scale=0.5,trim=0 0.4cm 0
0]{loop2.pdf}\bra = - A^{-3}\bla \includegraphics[scale=0.5 ]{loopno.pdf}\bra$.
\end{enumerate}
Moreover,  $\bla \quad \bra$ coincides  with  the  Kauffman  bracket  polynomial   on  classical  knots  and on  classical  links,  provided  that  they  are  considered tied  links  whose components   are  all tied  together.
\end{proposition}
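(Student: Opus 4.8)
The plan is to transcribe Kauffman's proof of \cite[Proposition 2.5]{kaTOP}, pausing only at the one place where the ties must be tracked. For parts (i) and (ii) I would resolve the single crossing of the curl. Because the two arcs meeting there belong to the \emph{same} component, the applicable relation is the ordinary smoothing $\bla D_+\bra = A\,\bla D_e\bra + A^{-1}\bla D_f\bra$ (a tie being added between the smoothed strands, exactly as in the proof of Theorem \ref{prop2}), not the modified rule (vi) reserved for crossings of two untied components. Of the two smoothings, one reconnects the arc into a single strand — the added tie then joins two points of one component, is non-essential, and may be erased, leaving the plain strand (loopno) — while the other pinches off a small circle joined to the strand by an \emph{essential} tie, i.e. a tied disjoint circle. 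Evaluating this second term by rule (iii) contributes the factor $-(A^2+A^{-2})$, which is precisely the classical loop value $\delta$. Feeding this into the two-term expansion reproduces Kauffman's computation verbatim and yields
\[
\bla \text{(positive curl)} \bra = -A^{3}\,\bla \text{(plain strand)} \bra,
\qquad
\bla \text{(negative curl)} \bra = -A^{-3}\,\bla \text{(plain strand)} \bra.
\]

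For the final assertion I would argue that, on a classical link regarded as a tied link with all components tied together, the recursion never leaves this class and never calls on the variable $c$. Indeed, every crossing of such a diagram joins two arcs that lie either in the same component or in two components already tied, so each resolution uses the ordinary smoothing $A\,\bla D_e\bra + A^{-1}\bla D_f\bra$ rather than rule (vi); and since a smoothing merges or splits components but always adds a tie, the all-tied-together property is preserved throughout (cf. the description of $EB_n$ and of all-tied links in Section 2). Consequently every disjoint circle produced during the resolution is joined to the rest of the diagram by an essential tie, so it is absorbed by rule (iii) with the factor $-(A^2+A^{-2})$, while rule (ii) — the only rule introducing $c$ — is never invoked. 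The recursion therefore matches, step for step, the defining recursion of the classical Kauffman bracket with loop value $\delta=-(A^2+A^{-2})$; hence the two polynomials agree and the answer lies in $\Z[A^{\pm1}]$. The statement for knots is the one-component special case.

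The only place where the argument departs from a formal copy of \cite[Proposition 2.5]{kaTOP} is the bookkeeping of the ties, and this is where I expect the real (if modest) work to lie: one must verify that smoothing a self-crossing, or a crossing between tied components, always yields a circle attached by an \emph{essential} tie — so that rule (iii) and not rule (ii) governs its removal — and that the all-tied-together property survives every smoothing. Both facts follow from the mobility and creation/destruction conventions for ties recalled in Section 2, together with the first of the properties of $\L$ listed after Theorem \ref{TH2} (a self-crossing, once smoothed, produces two components that are tied together). Once these are in hand, every remaining step coincides with the classical computation.
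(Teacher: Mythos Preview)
Your proposal is correct and follows essentially the same route as the paper, which simply declares the proof ``analogous to \cite[Proposition 2.5]{kaTOP}'' and relies on the observation (already made in the proof of Theorem \ref{prop2}) that on all-tied links rules (ii) and (vi) can be forgotten and the remaining rules coincide with the classical bracket rules with a tie added. Your explicit tie bookkeeping --- that a self-crossing smoothing produces either a non-essential tie or a tied-off circle governed by rule (iii), and that the all-tied property is stable under smoothing --- is exactly the content the paper leaves implicit.
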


\smallbreak
Observe  that  an  ambient  isotopy invariant of oriented  tied  links  can  be  defined in  the  same  way  as  in \cite[Lemma 2.1]{kaTAMS}.
More precisely, we associate to    every oriented link diagram  $\overrightarrow D$, the  polynomial $\J(\overrightarrow D)$  defined by
\begin{equation}\label{J}
\J(\overrightarrow D)  = (-A)^{-3w(D)} \bla D\bra \in {\Bbb Z}[ A^{\pm1}, c^{\pm 1} ]. \end{equation}

\begin{theorem}
The polynomial $\J$   is a  tie--isotopy invariant  for tied links. \end{theorem}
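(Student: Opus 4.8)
The plan is to show that $\J$ is unchanged under all three Reidemeister moves, which together with the tie--mobility already built into the bracket will give full ambient tie--isotopy invariance. Two ingredients are already in hand. By Theorem \ref{prop2} the bracket $\bla\ \bra$ is invariant under Reidemeister moves II and III, and the writhe $w(D)$ is likewise invariant under these two moves (a crossing change of type II or III either leaves the writhe untouched or creates a cancelling pair of crossings of opposite sign). Consequently the product $(-A)^{-3w(D)}\bla D\bra$ is automatically invariant under RII and RIII, and the whole question reduces to the behaviour of $\J$ under Reidemeister move I.

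First I would treat a positive curl. Adding such a kink changes the writhe by $w(D)\mapsto w(D)+1$, while by the preceding Proposition the bracket is multiplied by $-A^{3}$; writing $D'$ for the diagram with the kink removed, one computes
$$
\J(\overrightarrow D)=(-A)^{-3(w(D')+1)}(-A^{3})\bla D'\bra=(-A)^{-3w(D')}\bla D'\bra=\J(\overrightarrow{D'}),
$$
since $(-A)^{-3}(-A^{3})=(-A^{-3})(-A^{3})=1$. Symmetrically, for a negative curl the writhe drops by one and the bracket acquires the factor $-A^{-3}$, and the identity $(-A)^{3}(-A^{-3})=1$ again forces $\J(\overrightarrow D)=\J(\overrightarrow{D'})$. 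Thus the normalising factor $(-A)^{-3w(D)}$ is chosen precisely to absorb the loop factors of the Proposition, exactly as in \cite[Lemma 2.1]{kaTAMS}, so $\J$ is invariant under RI as well.

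Having verified all three Reidemeister moves, I would conclude that $\J$ is an ambient--isotopy invariant of the underlying oriented diagram. The point specific to tied links is that the partition data must also be respected, and here there is nothing new to prove: $w(D)$ depends only on the underlying oriented diagram and is insensitive to ties, while $\bla\ \bra$ is already a tie--isotopy invariant by Theorem \ref{prop2} (its defining rules incorporate the mobility, creation and destruction of ties within a single class). Hence $\J$ is constant on tie--isotopy classes, as claimed.

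The step I would be most careful about, and the only real subtlety, is the sign and exponent bookkeeping that pairs each curl with the correct writhe sign: one must confirm that the curl contributing the factor $-A^{3}$ is indeed the one raising the writhe by $+1$ (and likewise $-A^{-3}$ with $-1$). This matching is the same one already fixed by rules (iv)--(v) of Theorem \ref{TH1} together with the normalisation $\widehat\L=a^{-w(D)}\L(D)$ of Theorem \ref{TH2}; once that correspondence is pinned down, the verification collapses to the two short computations above.
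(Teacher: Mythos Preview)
Your argument is correct and is precisely the standard Kauffman normalisation argument that the paper invokes (the text does not write out a proof but simply points to \cite[Lemma 2.1]{kaTAMS} in the sentence preceding the theorem). Your careful check that the $-A^{3}$ curl matches the $+1$ writhe contribution, together with the observation that the tie data are already handled by Theorem~\ref{prop2}, supplies exactly the details the paper leaves implicit.
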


\begin{corollary} \label{cor1} The  polynomial   $\J$, by  the  variable  change  $  A=  t^{-1/4}$,  is
  a  generalization of  the  Jones polynomial for tied links, that  is,  $\J$  becomes  the  one  variable Jones  polynomial on  classical  links,  provided that they  are  considered    tied  links  whose  components   are  all  tied  together. \end{corollary}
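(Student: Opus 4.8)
The plan is to reduce the statement entirely to Kauffman's classical passage from the bracket polynomial to the Jones polynomial, since the definition of $\J$ in (\ref{J}) was set up to imitate that passage verbatim. First I would fix a classical link and present it as a tied link $\overrightarrow{D}$ all of whose components are tied together. On such diagrams I would invoke the preceding Proposition, which asserts that the tied bracket $\bla D\bra$ coincides with the ordinary Kauffman bracket $\langle D\rangle$ of \cite{kaTOP}. The point to record here is that when every pair of components is tied together, rule (ii) of Theorem \ref{prop2} is never invoked, so the variable $c$ does not appear; the surviving rules (i), (iii), (iv) together with the two loop relations of the preceding Proposition are then exactly Kauffman's defining rules for $\langle\,\cdot\,\rangle$, the loop factor $-(A^2+A^{-2})$ of rule (iii) matching Kauffman's $\delta=-A^2-A^{-2}$.

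Next I would verify that the normalization (\ref{J}) is literally the one Kauffman uses. Because $w(D)$ is an integer, one has the elementary identity
$$(-A)^{-3w(D)}=(-A^3)^{-w(D)},$$
so that on all-tied-together diagrams
$$\J(\overrightarrow D)=(-A^3)^{-w(D)}\langle D\rangle,$$
which is precisely the expression that turns the regular-isotopy bracket into the ambient-isotopy invariant $f_L$ in \cite{kaTOP}.

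Then I would cite Kauffman's theorem directly: under the substitution $A=t^{-1/4}$, the invariant $f_L=(-A^3)^{-w(D)}\langle D\rangle$ equals the one-variable Jones polynomial $V_L(t)$. Combining the three steps shows that $\J$, restricted to links whose components are all tied together and evaluated at $A=t^{-1/4}$, reproduces the Jones polynomial, which is exactly the assertion of the Corollary.

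The argument has no serious obstacle, since the substantive work was already carried out in Theorem \ref{prop2} and the preceding Proposition, which pin down that $\bla\,\cdot\,\bra$ specializes to the Kauffman bracket in the all-tied-together case. The only points demanding care are confirming that the specialization is exact — that no residual power of $c$ survives and that rule (iii) genuinely matches Kauffman's loop value — and checking the normalization identity $(-A)^{-3w}=(-A^3)^{-w}$ that aligns (\ref{J}) with Kauffman's $f_L$; both are immediate.
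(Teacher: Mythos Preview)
Your proposal is correct and is exactly the argument the paper intends: the corollary is stated without proof because it follows immediately from the preceding Proposition (which identifies $\bla\,\cdot\,\bra$ with the classical Kauffman bracket on links whose components are all tied together) combined with Kauffman's result from \cite{kaTOP} that $(-A)^{-3w(D)}\langle D\rangle$ specializes to the Jones polynomial under $A=t^{-1/4}$. You have simply made explicit the two-line reasoning the paper leaves implicit.
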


\subsection{{\it Relationship  between $\brla$ and  $\L$ }}
Observe  that in  the  same  way  as  in  \cite[Proposition 3.2]{kaTAMS},  we see  that the  polynomial  $\brla$  is  recovered  from  $\L$  by  setting $a:=-A^3$, $z=A+A^{-1}$ and  $x:=c^{-1}$.   That is,   for  every    oriented     tied  link diagram $\overrightarrow D$, we  have
\begin{equation*}
  \J(\overrightarrow D)= (-A)^{-3w(D)}\L(D)_{(a,z,x)=(-A^3, A+A^{-1}, c^{-1})}.
\end{equation*}

\section{The tied BMW algebra}\label{tBMW}
In this section we introduce a sort of  tied BMW algebra, denoted by t--BMW, which  plays  the analogous role for the invariant $\mathcal{L}$, as the BMW algebra \cite{biweTAMS, muOJM} for the Kauffman polynomial $L$.  This  algebra is  defined  by  generators and  relations,   and its defining generators have   diagrammatical  interpretations which are  compatible  with both those  of  the  bt--algebra  and   the BMW--algebra. Also, in this  section  we  prove  that  the t--BMW algebra is  finite  dimensional; this result will  be crucial  to  demonstrate that  the  t--BMW algebra  supports a Markov trace.
\smallbreak

\subsection{}\label{sec51}
  Set   ${\Bbb K}={\Bbb C}(x,z,a)$.  For every  $n\ge 1$,  we  define the tied BMW algebra, or  t--BMW algebra, denoted by ${\mathcal K}_n={\mathcal K}_n(x,z,a )$,   as the ${\Bbb K}$--algebra  equal  to  ${\Bbb K}$  for  $n=1$ and,  for $n>1$, generated by  braids generators
$g_1, \ldots , g_{n-1}$, tangles generators $h_1, \ldots , h_{n-1}$, ties generators $e_1, \ldots , e_{n-1}$ and  tied--tangles generators $f_1, \ldots ,f_{n-1}$  subject to the following relations.


\noindent $A$--braid relations among the $g_i$'s:
\begin{eqnarray}
\label{b1}
g_i g_j & = &  g_j g_i \quad   \text{for}\quad \vert i-j\vert > 1,\\
\label{b2}
g_i g_j g_i &  = & g_j g_i g_j \quad  \text{for}\quad \vert i-j\vert =1.
\end{eqnarray}

\noindent BMW--relations:
\begin{eqnarray}
\label{bmw1}
h_i^ 2 & = & x^{-1}h_i \qquad  \text{ for all }  i , \\
\label{bmw2}
h_ih_j & = & h_jh_i \quad  \text{for}\quad \vert i-j\vert >1 , \\
\label{bmw3}
g_ih_j & = & h_jg_i \quad  \text{for}\quad \vert i-j\vert >1 , \\
\label{bmw4}
g_i h_i & = &  h_i g_i = a^{-1} h_i  \qquad  \text{ for all }  i   , \\
\label{bmw5}
h_i g_jh_i &  = & ah_i\quad  \text{for}\quad \vert i-j\vert =1  , \\
\label{bmw6}
 h_i h_j h_i  &  = &  h_i \qquad \text{for}\quad \vert i-j\vert =1  , \\
 \label{bmw7}
 g_i g_j h_i  &  = &  h_j g_i g_j  \quad = \quad h_jh_i  \qquad \text{for}\quad \vert i-j\vert =1  , \\
 \label{bmw8}
 g_i h_j g_i  &  = &    g_j^{-1}h_i g_j^{-1}    \qquad \text{for}\quad \vert i-j\vert =1  , \\
  \label{bmw9}
 g_i h_j h_i  &  = &    g_j^{-1}h_i     \qquad \text{for}\quad \vert i-j\vert =1  , \\
  \label{bmw10}
 h_i h_j g_i  &  = &   h_i g_j^{-1}     \qquad \text{for}\quad \vert i-j\vert =1.
\end{eqnarray}

\noindent Tied braid relations:
\begin{eqnarray}
\label{bt1}
e_ie_j & =  &  e_j e_i \qquad \text{ for all }  i,j  , \\
\label{bt2}
e_i^2 & = &  e_i \qquad \text{ for all }  i   , \\
\label{bt3}
e_ig_i  & = &   g_i e_i \qquad  \text{ for all }   i     , \\
\label{bt4}
e_ig_j  & = &   g_j e_i \qquad  \text{ for } \vert i - j\vert > 1  , \\
\label{bt5}
e_ie_jg_i & = &  g_ie_ie_j \quad = \quad e_jg_ie_j\qquad \text{ for }   \vert i  -  j\vert =1 , \\
\label{bt6}
e_ig_jg_i & = &   g_jg_i e_j \qquad \text{ for }   \vert i - j\vert =1 .
\end{eqnarray}

\noindent New relations:
\begin{eqnarray}
\label{new1}
 e_i h_i & = & h_i e_i \quad = \quad h_i  \qquad  \text{ for all }    i  , \\
 \label{new2}
 e_i h_j & = & h_j e_i   \qquad  \text{ for }     \vert i - j\vert > 1  , \\
\label{new8}
 f_i e_i & = & e_i f_i \quad = \quad f_i  \qquad  \text{ for all }  i   , \\
 \label{new9}
 f_i e_j & = & e_j f_i  \quad = \quad e_j h_i e_j  \qquad \text{for}\quad \vert i-j\vert =1   , \\
 \label{new10}
f_i g_i  & = & g_i f_i \quad = \quad a^{-1} f_i  \qquad  \text{ for all }  i   , \\
  \label{new13a}
 g_if_j g_i & = & g_j^{-1} f_i g_j^{-1}   \qquad \text{for}\quad \vert i-j\vert =1   ,  \\
 \label{new14}
 g_i+g_i^{-1} & = & z (e_i + f_i) \qquad  \text{ for all }   i.
\end{eqnarray}


The relations  above imply:
\begin{eqnarray}
\label{quadratic}
   g_i^2 & = &  z(e_ig_i+a^{-1}f_i)-1  \text{ for all }   i, \\
 \label{new3}
    f_i^2& = & y \  f_i \qquad  \text{ for  all }   i   , \\
 \label{new5}
 f_i h_i& = & h_i f_i  \quad  = \quad y \  h_i  \qquad  \text{ for all }    i      , \\
   \label{bt7}
e_ig_jg_i^{-1} & = & g_jg_i^{-1}e_j  \qquad \text{ for  $\vert i - j\vert =1$} , \\
\label{new4}
f_i f_j & = & e_j h_i h_j e_i  \qquad \text{for}\quad \vert i-j\vert =1   , \\
\label{new6}
f_i h_j  & = & e_j h_i  h_j \qquad \text{for}\quad \vert i-j\vert =1   , \\
\label{new7}
h_j f_i  & = & h_j h_i  e_j \qquad \text{for}\quad \vert i-j\vert =1   , \\
\label{new11}
 h_i g_j f_i & = & a h_i e_j     \qquad \text{for}\quad \vert i-j\vert =1   , \\
\label{new12}
 f_i g_j h_i & = & a e_j h_i   \qquad \text{for}\quad \vert i-j\vert =1   , \\
 \label{new13}
 f_i g_j f_i & = & a e_j  h_i e_j    \qquad \text{for}\quad \vert i-j\vert =1 ,
\end{eqnarray}
  where   $y:=  (a+a^{-1})z^{-1}-1$. Observe  that  relations (\ref{new3}) and (\ref{new5})  are obtained  by using the expression of $f_i$ given by (\ref{new14}); namely
 $$
f_i^2 = f_i \left(\frac{g_i+g_i^{-1}}{z}-e_i\right)
      = \frac{a^{-1}f_i+ a f_i}{z}- f_i
     = y f_i
$$ follows  from  (\ref{new10}) and  (\ref{new8}), whereas
$$
f_i h_i= \left(\frac{g_i+g_i^{-1}}{z}-e_i\right) h_i
= \frac{a^{-1}h_i+ a h_i}{z}- h_i
= y h_i
$$ follows from  (\ref{bmw4} ) and  (\ref{new1}).

\subsection{\it  Diagrams for  the  t--BMW algebra} \label{sec52}
\mbox{}

The  natural  counterpart of  a  classical  $n$--tangle  in the  context of  tied  links is  a {\it tied $n$--tangle}, i.e.,  a  rectangular  piece of a diagram of a tied link with  $n$  arcs and other closed  curves in generic position,  such  that the     arcs  have $n$  end points at  the top  and  $n$ end points at bottom of  the  rectangle, cf. \cite{mowa}. Ties  connect  the  curves  inside  the  rectangle.  Observe  that,  because of  the   ties' mobility property \cite[6.3.3]{aijuMMJ1}, the  ties can   lie  entirely  inside or outside the  rectangle.

The  relation of  tie--isotopy is  thus  extended   to  tied  tangles according to the  following  definition.

\begin{definition}\label{tangleisotop} Two tied  $n$--tangles  are {\it  regular tie--isotopic} if,  by
 substituting   the  first  one, as a part of  a  tied  link,  with the  second one,   the  regular tie--isotopy  class of  the  tied link  is  preserved.
\end{definition}

  For  our purpose  we need    to consider here  only certain  particular  tied $n$--tangles,  that  we  are  going  to introduce; later we  will define an  algebra of  such  diagrams  that  result  naturally  isomorphic  to   the algebra $\K_n$.


 Observe  that  the defining generators of  the  t--BMW algebra consist of four sets of generators: the $g_i$'s, which  correspond (diagrammatically) to the usual braid generators; the $h_i$'s,  which correspond  to the   tangle generators of  the BMW--algebra; the $e_i$'s,  already  introduced in the bt--algebra, which correspond  to the ties, and finally  certain  news elements $f_i$'s.  This last set of  generators   in fact corresponds to  tangle generators  with a  tie connecting  the up and  bottom arcs.

More precisely, for  every  $n \ge 1$, we  associate to   the unity of  the  algebra $\K_n$  the  trivial  braid diagram made  by $n$  parallel  vertical  threads and for  $n > 1$,
and  to the defining generators of  ${\mathcal K}_n$  with  index $i$  we  associate  diagrams  coinciding with the identity except for  the  $i$--th  and  $(i+1)$--st  strands
as shown in the figure below.

\begin{figure}[H]
 \centering{
\includegraphics[scale=.5]{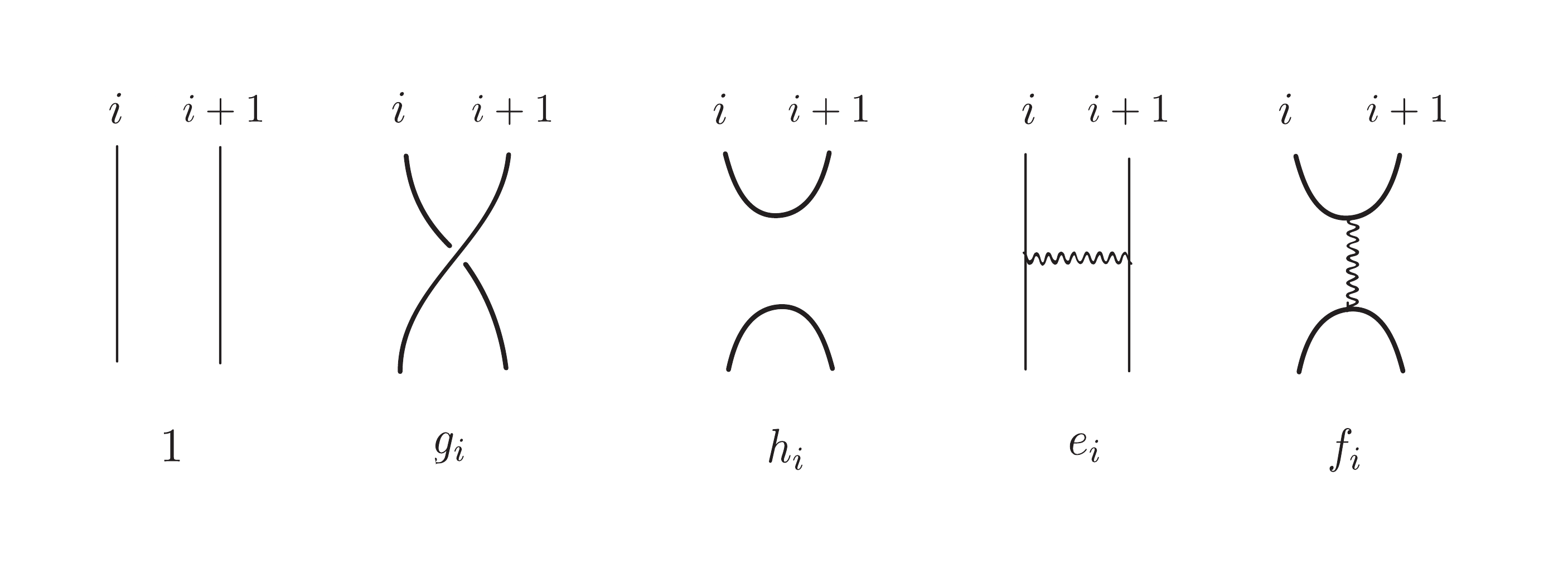}}
\caption{Diagrammatical interpretation of the defining generators of $\mathcal{K}_n$.}\label{fig1}
\end{figure}

 The (associative)  multiplication of  diagrams    is defined   by concatenation, i.e.,   the multiplication   $d_1d_2$   is done by putting the diagram $d_2$ below of the diagram $d_1$.

  Let  $W_n$ be  the  set  of  diagrams obtained by  translating the words in the  generators of   $\K_n$   generator  by  generator. $W_n$ is  provided with  the multiplication by  concatenation.  Denote ${\Bbb K} W_n$ the ${\Bbb K}$--vector space with basis $W_n$  and    extend linearly  the product to ${\Bbb K}W_n$. We define  the algebra $\mathcal{W}_n$ as the  ${\Bbb K}$--algebra constructed from ${\Bbb K}T_n$ by factoring out the defining relations of  $\K_n$.

\begin{figure}[H]
 \centering
\includegraphics[scale=.5]{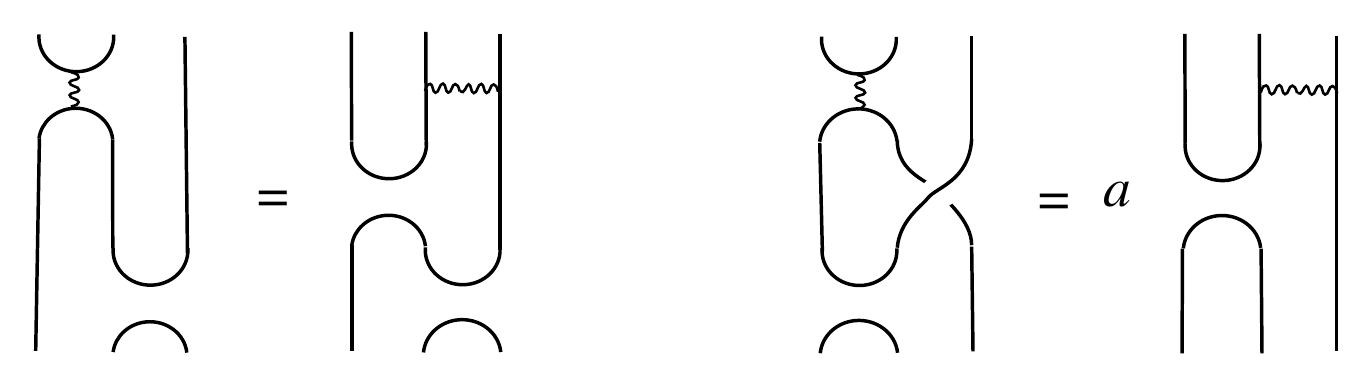}
\caption{Relations (\ref{new6}) and (\ref{new12}). }\label{fig2}
\end{figure}

\begin{remark} \label{trisot} \rm Observe that   every  monomial defining relation   of $\K_n$, when  translated  to  the corresponding pair of diagrams, is a  relation of tie--isotopy of  tied tangles,  according to   Definition \ref{tangleisotop} (see Fig.  \ref{fig2}, left).
Note  that there  are exactly  five  non  monomial   defining  relations,  namely    relations (\ref{bmw1}), (\ref{bmw4}), (\ref{new14})  and the second   relations in (\ref{bmw5}) and (\ref{new10});  an example is shown in Fig.  \ref{fig2}, right.
 \end{remark}

 In particular,   the  above  construction defines     a natural algebra isomorphism
  $\chi_n:\K_n \longrightarrow \mathcal{W}'_n$,   where   $\mathcal{W}'_n$  is  the  image
of $\K_n$ in  $\W_n$.

\begin{remark}\label{tower}  \rm  Observe  that,  for  every word  $\alpha \in K_n$,   the  diagram $\chi_n(\alpha) \in \mathcal{W}'_n$  can be identified  with  the  diagram in $\mathcal{W}'_{n+1}$,  obtained  by  adding at  right of $\chi_n(\alpha)$ one   vertical  thread.  This  defines  a  natural injective  morphism  of  $\mathcal{W}'_n$ in $\mathcal{W}'_{n+1}$, for $n\ge 1$.  We thus  use  the isomorphisms  $\chi_n$'s  to  obtain the tower of
algebras $\K_1\subset\K_2\cdots $,  which is obtained    by identifying,  for  every $n\ge 1$,    $\K_n$ with the subalgebra of $\K_{n+1}$
generated by the defining generators of $\K_{n+1}$:  $g_i$'s,
$e_i$'s, $h_i$'s and $f_i$'s for $1\leq i\leq n-1$.
\end{remark}

\subsection{{\it The finite dimension  of  the t--BMW algebra}}

 In $\mathcal{K}_n$ we define the subset $\Gamma_n$ by
$$
\Gamma_n:=\{1, g_{n-1}, g_{n-1}e_{n-1}, h_{n-1}, e_{n-1}, f_{n-1} \},
$$
and we shall say that   a word  in the defining generators  of $\mathcal{K}_n$ can be {\it reduced}, if it  can be written  as  a linear combination of words having at most one element of $\Gamma_n$.  Further, we say that an element in ${\mathcal K}_n$ can be reduced if it can
be written as   linear combination of reduced words.

\begin{proposition}\label{finitedimensional}
Every element in  ${\mathcal K}_n$  can be reduced. Thus  ${\mathcal K}_n$ is finite dimensional.
\end{proposition}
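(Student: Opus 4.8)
The plan is to argue by induction on $n$, proving the sharper statement that $\K_n$ equals the ${\Bbb K}$--span of the reduced words, i.e. $\K_n=\K_{n-1}\,\Gamma_n\,\K_{n-1}$ (the ${\Bbb K}$--span of the products $a\,\gamma\,b$ with $a,b\in\K_{n-1}$ and $\gamma\in\Gamma_n$). The base case $n=1$ is trivial since $\K_1={\Bbb K}$. Granting the spanning statement, finite dimensionality is immediate: $\Gamma_n$ has six elements and $\K_{n-1}$ is finite dimensional by induction, so $\dim_{\Bbb K}\K_n\le 6\,(\dim_{\Bbb K}\K_{n-1})^2$.

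For the inductive step I would let $V\subseteq\K_n$ be the ${\Bbb K}$--span of the reduced words and show that $V$ is stable under right multiplication by each defining generator. Since $1\in V$, this stability propagates along words and forces $V=\K_n$. So take a reduced word $a\,\gamma\,b$ with $a,b\in\K_{n-1}$, $\gamma\in\Gamma_n$, and multiply on the right by a generator $s$. If $s$ has index at most $n-2$ then $s\in\K_{n-1}$, so $a\,\gamma\,b\,s=a\,\gamma\,(bs)$ is again reduced and there is nothing to do. Thus the entire difficulty lies in right multiplication by a top generator $s\in\{g_{n-1}^{\pm1},h_{n-1},e_{n-1},f_{n-1}\}$, where one must reduce the product $\gamma\,b\,s$ (the outer factor $a\in\K_{n-1}$ being harmlessly absorbed into the left $\K_{n-1}$--part afterwards).

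Here I would apply the inductive hypothesis to the middle factor, writing $b$ in reduced $\K_{n-1}$--form $b=\sum_k c_k\,a_k'\,\delta_k\,b_k'$ with $a_k',b_k'\in\K_{n-2}$ and $\delta_k\in\Gamma_{n-1}$. Every generator occurring in $a_k'$ or $b_k'$ has index at most $n-3$, hence commutes with $\gamma$ and with $s$ (index difference $>1$, by (\ref{b1}), (\ref{bmw2}), (\ref{bmw3}), (\ref{bt1}), (\ref{bt4}), (\ref{new2}) and their consequences, the latter covering $f$ via (\ref{new14})). Sliding $a_k'$ to the left of $\gamma$ and $b_k'$ to the right of $s$ turns each summand into $a_k'\,(\gamma\,\delta_k\,s)\,b_k'$. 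Since $a_k',b_k'\in\K_{n-2}\subseteq\K_{n-1}$, the whole matter is reduced to the finitely many \emph{core products} $\gamma\,\delta\,s$ with $\gamma\in\Gamma_n$, $\delta\in\Gamma_{n-1}$, and $s$ a top generator; these live in the subalgebra generated by the two top indices $n-1$ and $n-2$ only.

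The decisive step, and the one I expect to be the main obstacle, is the bounded but lengthy verification that each core product $\gamma\,\delta\,s$ can be rewritten as a linear combination of words carrying at most one element of $\Gamma_n$. Essentially every defining relation is used here: when $\delta=1$ and $\gamma=s=g_{n-1}$ the quadratic relation (\ref{quadratic}) collapses $g_{n-1}^2$ and, through (\ref{bt3}), produces precisely the compound element $g_{n-1}e_{n-1}$ alongside $f_{n-1}$ and $1$ --- which is exactly why $g_{n-1}e_{n-1}$ must be listed in $\Gamma_n$; the mixed $h$/$g$ cases are absorbed by (\ref{bmw5}), (\ref{bmw6}), (\ref{bmw7}); the $e$--cases by (\ref{bt5}), (\ref{bt6}); and the products involving $f_{n-1}$ by the derived relations (\ref{new4}), (\ref{new11}), (\ref{new12}), (\ref{new13}). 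Checking that every triple $(\gamma,\delta,s)$ indeed collapses in this way is the core computation; once it is in place, $V$ is closed under right multiplication, whence $V=\K_n$, and the dimension bound above yields the proposition.
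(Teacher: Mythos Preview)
Your proposal is correct and follows essentially the same approach as the paper: the paper also proceeds by induction on $n$, reduces an arbitrary word $w_1d_1w_2d_2\cdots w_kd_k$ (with $w_i\in\K_{n-1}$, $d_i\in\Gamma_n$) by induction on $k$, and uses the inductive hypothesis on the middle factor $w_2\in\K_{n-1}=\K_{n-2}\Gamma_{n-1}\K_{n-2}$ together with the commutation of $\K_{n-2}$ with the top generators to boil everything down to the finite list of core products $d_1\,d\,d_2$ with $d_1,d_2\in\Gamma_n$, $d\in\Gamma_{n-1}$. Your ``closure of $V$ under right multiplication'' is just the $k\mapsto k+1$ step of that same induction, phrased generator by generator; the only cosmetic difference is that your right factor $s$ runs over $\{g_{n-1}^{\pm1},h_{n-1},e_{n-1},f_{n-1}\}$ whereas the paper's $d_2$ runs over $\Gamma_n$, and the paper records the case-by-case collapses (your ``core computation'') explicitly in two reduction tables supported by four preparatory lemmas.
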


To prove   this proposition  we need some relations and reductions which are grouped in the following lemmas.

\begin{lemma}\label{ij>1}
For all $i,j$ such that $\vert i - j\vert > 1$  the following relations hold:
\begin{enumerate}
\item[(i)]
$f_i f_j   =  f_j f_i $,
\item[(ii)]
$f_i h_j   =  h_j f_i $,
\item[(iii)]
$f_i g_j  =  g_j f_i  $,
\item[(iv)]
$f_i e_j =  e_j f_i.$
\end{enumerate}
\end{lemma}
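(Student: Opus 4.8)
The plan is to eliminate the unfamiliar generators $f_i$, $f_j$ in favor of the braid and tie generators by means of relation (\ref{new14}), which gives
\begin{equation*}
f_i = z^{-1}(g_i + g_i^{-1}) - e_i.
\end{equation*}
Substituting this expression reduces every assertion of the lemma to the far-commutation relations already available for $g$, $g^{-1}$, $e$, and $h$, so the whole proof becomes a term-by-term commutation once one preliminary point is settled.

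First I would record that, for $|i-j|>1$, the commutation relations propagate to inverses: multiplying $g_ig_j=g_jg_i$ of (\ref{b1}) on the appropriate sides by $g_i^{-1}$ and $g_j^{-1}$ yields $g_i^{-1}g_j=g_jg_i^{-1}$ and $g_i^{-1}g_j^{-1}=g_j^{-1}g_i^{-1}$, and the identical manipulation converts (\ref{bmw3}) into $g_i^{-1}h_j=h_jg_i^{-1}$ and (\ref{bt4}) into $g_i^{-1}e_j=e_jg_i^{-1}$. With these in hand, items (ii), (iii), (iv) are immediate: expanding $f_i$ and moving the right-hand generator past each summand, item (iii) uses (\ref{b1}) together with its inverse form and $e_ig_j=g_je_i$ from (\ref{bt4}); item (iv) is identical with $g_j$ replaced by $e_j$, invoking (\ref{bt4}) on the braid terms and (\ref{bt1}) for $e_ie_j=e_je_i$; and item (ii) is again the same computation, now calling on (\ref{bmw3}) for the braid terms and (\ref{new2}) for $e_ih_j=h_je_i$.

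Item (i) then follows formally from the two commutations already proved. Writing $f_j=z^{-1}(g_j+g_j^{-1})-e_j$ and using $f_ig_j=g_jf_i$ (item (iii)), its conjugate consequence $f_ig_j^{-1}=g_j^{-1}f_i$, and $f_ie_j=e_jf_i$ (item (iv)), one slides $f_i$ past each term of $f_j$ and recovers $f_if_j=f_jf_i$; alternatively one can substitute the expression (\ref{new14}) for both $f_i$ and $f_j$ and commute directly. The entire argument is routine; the only recurring point that warrants care is the propagation of the $|i-j|>1$ commutation relations to the inverses $g_i^{-1}$, which is the single ingredient reused in each item.
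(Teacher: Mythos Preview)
Your proof is correct and follows essentially the same approach as the paper: both use (\ref{new14}) to rewrite $f_i = z^{-1}(g_i+g_i^{-1})-e_i$ and then reduce each item to the known far-commutation relations among the $g$'s, $e$'s, and $h$'s. Your write-up is in fact more careful than the paper's one-line version; in particular you correctly identify (\ref{new2}) as the relation needed for item (ii), whereas the paper's citation list omits it.
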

\begin{proof}
It is sufficient to write, by (\ref{new14})
\begin{equation}\label{fi=}
 f_i = z^{-1}(g_i + g_i^{-1}) -e_i.
\end{equation}
Then all relations of the lemma follow  from (\ref{b1}), (\ref{bmw3}), (\ref{bt1}), (\ref{bt3}) and (\ref{bt4}).
\end{proof}
\begin{lemma}\label{ij=1}
For all $i,j$ such that $\vert i - j\vert =1$  the following relations hold:
\begin{enumerate}
\item[(i)]
$f_if_jf_i  = e_j h_i e_j$,
\item[(ii)]
$g_ig_jf_i = f_jg_ig_j$,
\item[(iii)]
$f_if_jh_i = e_jh_i$,
\item[(iv)]
$h_if_jf_i = h_ie_j$,
\item[(v)]
$f_if_je_i = f_if_j$,
\item[(vi)]
$e_if_jh_i = f_jh_i $,
  \item[(vii)]
$h_if_jh_i  =  h_i$.
\end{enumerate}
\end{lemma}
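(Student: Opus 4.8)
The plan is to derive every one of the seven identities from the auxiliary relations (\ref{new4})--(\ref{new7}), which already express the delicate products of the new generators $f_i$ (the products $f_if_j$, $f_ih_j$ and $h_jf_i$) purely in terms of the $e_i$'s and $h_i$'s. Once the $f$'s have been eliminated in this way, each identity should collapse by repeated use of a small set of absorption rules: the idempotency $e_i^2 = e_i$ (\ref{bt2}), the absorptions $e_ih_i = h_ie_i = h_i$ (\ref{new1}) and $f_ie_i = e_if_i = f_i$ (\ref{new8}), and the BMW triple relation $h_ih_jh_i = h_i$ (\ref{bmw6}). Throughout I would use freely that each of (\ref{new4})--(\ref{new7}) also holds with $i$ and $j$ interchanged, since the hypothesis $|i-j|=1$ is symmetric.

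Concretely, for items (i), (iii), (iv), (v) I would start from (\ref{new4}), $f_if_j = e_jh_ih_je_i$. For (i), writing $f_if_jf_i = e_jh_ih_j(e_if_i)$ and applying $e_if_i = f_i$ reduces the tail to $h_jf_i$, which (\ref{new7}) turns into $h_jh_ie_j$; the middle then contains $h_ih_jh_i$, which collapses to $h_i$ by (\ref{bmw6}), leaving $e_jh_ie_j$. Item (iii) is the same computation truncated one step earlier, $f_if_jh_i = e_jh_ih_j(e_ih_i) = e_j(h_ih_jh_i) = e_jh_i$. Item (iv) uses the index-swapped (\ref{new4}) together with $h_ie_i = h_i$ and again (\ref{bmw6}), and item (v) is immediate from (\ref{new4}) and $e_i^2 = e_i$. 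For (vi) and (vii) I would instead use the index-swapped (\ref{new6}), namely $f_jh_i = e_ih_jh_i$; then (vi) follows from $e_i^2 = e_i$, while (vii) follows from $h_ie_i = h_i$ and one further application of (\ref{bmw6}).

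The only structurally different item is (ii), $g_ig_jf_i = f_jg_ig_j$, and this is where I expect the real work to lie. Here I would not invoke (\ref{new4})--(\ref{new7}), but instead expand both sides through (\ref{new14}) as $f_i = z^{-1}(g_i+g_i^{-1}) - e_i$. The claimed identity then splits into three pairings: the $z^{-1}g$--piece reduces to the braid relation $g_ig_jg_i = g_jg_ig_j$ (\ref{b2}); the $z^{-1}g^{-1}$--piece reduces to the conjugation identity $g_ig_jg_i^{-1} = g_j^{-1}g_ig_j$, which is itself a consequence of (\ref{b2}); and the remaining $-e$--piece demands $g_ig_je_i = e_jg_ig_j$, which is exactly relation (\ref{bt6}) read with $i$ and $j$ interchanged. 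The main obstacle is the bookkeeping: one must check that after substitution the three summands pair up correctly, and in particular recognize the conjugation identity and the index-swapped form of (\ref{bt6}) as the precise facts that close the two nontrivial pairings.
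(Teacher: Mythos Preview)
Your proof is correct and follows essentially the same strategy as the paper's: for (ii) expand $f_i$ via (\ref{new14}) and match the three resulting summands using (\ref{b2}) and (\ref{bt6}); for the remaining items eliminate the $f$'s through the derived relations and collapse with the absorption rules (\ref{bt2}), (\ref{new1}), (\ref{new8}) and the BMW triple relation (\ref{bmw6}). The only cosmetic difference is that in (iii)--(vi) the paper tends to open with (\ref{new6}), (\ref{new7}) or (\ref{new9}) where you open with (\ref{new4}), but the resulting computations are equally short.
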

\begin{proof}
Relation (i) is obtained as follows:
$$
f_if_jf_i = (e_jh_ih_je_i)f_i = e_jh_ih_jf_i = e_jh_ih_jh_ie_j = e_jh_ie_j,
$$
where the first to the fourth equalities are obtained, respectively, by using  (\ref{new4}), (\ref{new8}), (\ref{new7}) and (\ref{bmw6}).

A proof of (ii) follows by expanding $f_i$ (see (\ref{fi=})) in both sides of the equality and  having in mind (\ref{b2}) and (\ref{bt6}).

From (\ref{new6}) and (\ref{new8})  it  follows that   $f_if_jh_i = f_ie_ih_jh_i = f_ih_jh_i$. Then applying again (\ref{new6}) and later (\ref{bmw6}), we obtain  (iii).

We prove now (iv), we have:
\begin{eqnarray*}
h_i f_jf_i & = & h_ih_je_if_{i} \quad \text{from (\ref{new7})}\\
& = & h_ih_jf_i  \quad \text{from (\ref{new1})}\\
& = & h_ih_jh_ie_j  \quad \text{from (\ref{new7})}\\
& = & h_ie_j \quad \text{from (\ref{bmw6})}.
\end{eqnarray*}

Relation (v) is a direct consequence of (\ref{new9}) and (\ref{new8}).

Relation (vi) is a direct consequence of (\ref{new9}) and (\ref{new1}).

  Relation (vii)  is  obtained by applying (\ref{new7}),  then (\ref{new1}) and (\ref{bmw6}).

\end{proof}

\begin{lemma}\label{reductions1}
  The following relations hold in $\mathcal{K}_n$:
\begin{enumerate}
\item[(i)] $ h_n g_{n-1}g_ne_n =   h_n f_{n-1}$,
\item[(ii)]  $g_ne_ng_{n-1}h_n =  f_{n-1} h_n$.
\end{enumerate}
 \end{lemma}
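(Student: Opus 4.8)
The plan is to prove both identities by a short chain of substitutions, observing that each side of each equality collapses to one common word in the tangle generators $h_\ast$ and the tie $e_n$. The main tool is the BMW relation (\ref{bmw7}), which in its two forms reads $g_ig_jh_i=h_jh_i$ and $h_jg_ig_j=h_jh_i$ for $\vert i-j\vert=1$; it is exactly what lets a pair of consecutive braid generators be absorbed into a single additional tangle generator. The auxiliary role is played by the transport relations (\ref{new6}) and (\ref{new7}), which rewrite the mixed products $f_{n-1}h_n$ and $h_nf_{n-1}$ purely in terms of $h$'s and $e_n$; these are what let me recognize the collapsed word as the $f_{n-1}$-term appearing on the right-hand sides.

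For part (i) I would reduce both ends to the common word $h_nh_{n-1}e_n$. On the left I group the braid block and apply (\ref{bmw7}) in the form $h_jg_ig_j=h_jh_i$ with $j=n$ and $i=n-1$, obtaining $h_ng_{n-1}g_n=h_nh_{n-1}$ and hence $h_ng_{n-1}g_ne_n=h_nh_{n-1}e_n$. On the right I apply (\ref{new7}) with $j=n$ and $i=n-1$, which gives $h_nf_{n-1}=h_nh_{n-1}e_n$. Comparing the two expressions yields (i).

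For part (ii) I would start from the left-hand side $g_ne_ng_{n-1}h_n$, first commuting $g_n$ past $e_n$ by (\ref{bt3}) to get $e_ng_ng_{n-1}h_n$. Then (\ref{bmw7}) in the form $g_ig_jh_i=h_jh_i$ with $i=n$ and $j=n-1$ collapses $g_ng_{n-1}h_n=h_{n-1}h_n$, so the whole expression becomes $e_nh_{n-1}h_n$. Finally (\ref{new6}) with $i=n-1$ and $j=n$ reads $f_{n-1}h_n=e_nh_{n-1}h_n$, which identifies the result with the right-hand side and proves (ii).

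I do not expect a genuine obstacle: each part is a single application of (\ref{bmw7}) flanked by one commutation or transport step, and there are no cancellations or case splits to manage. The only point that needs care is the orientation of the indices, since both (\ref{bmw7}) and the transport relations (\ref{new6})/(\ref{new7}) come in forms that are not symmetric in $n$ and $n-1$; choosing the wrong assignment produces a different word, so the bulk of the verification is just matching the adjacent pair $(n,n-1)$ to the correct instance of each relation.
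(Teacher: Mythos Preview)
Your proof is correct and follows essentially the same route as the paper's own argument: for (i) you reduce both sides to $h_nh_{n-1}e_n$ via (\ref{bmw7}) and (\ref{new7}), and for (ii) you commute with (\ref{bt3}), collapse with (\ref{bmw7}), and identify via (\ref{new6}), exactly as in the paper. The only cosmetic difference is that the paper presents (i) as a direct chain from left to right rather than meeting in the middle.
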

\begin{proof}
For (i), we have:
\begin{eqnarray*}
h_n g_{n-1}g_ne_n  & = & h_n h_{n-1}  e_n \qquad \text{from} \quad (\ref{bmw7})\\
& = &
 h_n f_{n-1}  \qquad \text{from} \quad (\ref{new7}). \\
  \end{eqnarray*}
 For (ii), we have:
\begin{eqnarray*}
g_n e_n g_{n-1}h_n   & = & e_n g_n g_{n-1}  h_n    \  = \  e_n  h_{n-1} h_n \qquad \text{from} \quad (\ref{bmw7})\\
& = &
 f_{n-1} h_n  \qquad \text{from} \quad (\ref{new6}).\\
  \end{eqnarray*}
\end{proof}

\begin{lemma}\label{reductions2}
The following relations hold in $\mathcal{K}_n$:
\begin{enumerate}
\item[(i)] $ h_n f_{n-1}g_n = h_n f_{n-1}g_n e_n =h_ne_{n-1}g_{n-1}^{-1}$,
\item[(ii)] $g_ne_nf_{n-1}g_n =  g_nf_{n-1}g_ne_n=  g_ne_nf_{n-1}g_ne_n=g_{n-1}^{-1}e_{n-1}h_ne_{n-1}g_{n-1}^{-1} $,
\item[(iii)] $f_nf_{n-1}g_n =f_nf_{n-1}g_ne_n = e_{n-1}h_ne_{n-1}g_{n-1}^{-1}$,
\item[(iv)] $ g_nf_{n-1}f_n = g_ne_nf_{n-1}f_n= g_{n-1}^{-1}e_{n-1}h_ne_{n-1} $,
\item[(v)] $g_nf_{n-1}h_n = g_ne_nf_{n-1}h_n= g_{n-1}^{-1}e_{n-1}h_n$.
\end{enumerate}

\end{lemma}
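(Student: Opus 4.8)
The plan is to prove each of (i)--(v) by the same three--stage reduction: first eliminate the tied--tangle generators $f_{n-1}$ and $f_n$ in favour of products of $h$'s and $e$'s, then collapse the resulting tangle generators with the BMW relations, and finally slide the ties into place to reach the stated normal form. Throughout the indices satisfy $\vert n-(n-1)\vert=1$, so every relation quoted below is used in its $\vert i-j\vert=1$ form.

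Before treating the five items I would isolate the one genuinely new ingredient, an \emph{inverse} tie--braid identity that appears neither among the defining relations (\ref{b1})--(\ref{new14}) nor among the derived relations (\ref{new3})--(\ref{new13}), namely
\[
e_n g_{n-1}^{-1} e_n \; = \; e_{n-1} e_n g_{n-1}^{-1} \; = \; g_{n-1}^{-1} e_{n-1} e_n .
\]
I would obtain it as follows. By (\ref{bt6}) one has $e_{n-1} g_n g_{n-1}=g_n g_{n-1} e_n$, hence $e_n=g_{n-1}^{-1} g_n^{-1} e_{n-1} g_n g_{n-1}$; substituting this for the left--hand $e_n$, then commuting $g_n$ past $e_n$ by (\ref{bt3}) and using that $e_{n-1}e_n$ commutes with both $g_n$ and $g_{n-1}$ (the two instances of (\ref{bt5})), the braid factors cancel and the identity drops out. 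From it I immediately get the companion relations $e_n g_{n-1}^{-1} h_n = g_{n-1}^{-1} e_{n-1} h_n$ and $h_n g_{n-1}^{-1} e_n = h_n e_{n-1} g_{n-1}^{-1}$ by inserting $h_n e_n=h_n=e_n h_n$ from (\ref{new1}) together with (\ref{bt2}); these are exactly the moves that convert a tie of index $n$ sitting next to $g_{n-1}^{-1}$ into a tie of index $n-1$.

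With this in hand the five items become short computations. For (i) I write $h_n f_{n-1}=h_n h_{n-1} e_n$ by (\ref{new7}), slide $e_n$ past $g_n$ by (\ref{bt3}), collapse $h_n h_{n-1} g_n=h_n g_{n-1}^{-1}$ by (\ref{bmw10}) to reach $h_n g_{n-1}^{-1} e_n$, and finish with the companion identity above. For (iii), (iv) and (v) the pattern is identical after first replacing the $f$--products: $f_n f_{n-1}=e_{n-1} h_n h_{n-1} e_n$ and $f_{n-1} f_n=e_n h_{n-1} h_n e_{n-1}$ by (\ref{new4}), and $f_{n-1} h_n=e_n h_{n-1} h_n$ by (\ref{new6}); then (\ref{bmw9}) (for $g_n h_{n-1} h_n=g_{n-1}^{-1} h_n$) or (\ref{bmw10}) collapses the tangles and the inverse identity relocates the ties. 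Item (ii) is the longest: here $e_n f_{n-1}=e_n h_{n-1} e_n$ by (\ref{new9}), (\ref{bt3}) frees the inner $e_n$, (\ref{bmw8}) gives $g_n h_{n-1} g_n=g_{n-1}^{-1} h_n g_{n-1}^{-1}$, and two applications of the companion identities (on the left via $e_n g_{n-1}^{-1} h_n=g_{n-1}^{-1} e_{n-1} h_n$ and on the right via the result of (i)) produce $g_{n-1}^{-1} e_{n-1} h_n e_{n-1} g_{n-1}^{-1}$. In every item the alternative equal forms carrying an extra $e_n$ are immediate from $e_n g_n=g_n e_n$, $e_n^2=e_n$ and $h_n e_n=h_n$, i.e.\ from (\ref{bt3}), (\ref{bt2}) and (\ref{new1}).

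The main obstacle, and the only step that is not a mechanical substitution, is establishing the inverse tie--braid identity $e_n g_{n-1}^{-1} e_n=e_{n-1} e_n g_{n-1}^{-1}$: the listed relations give the analogous statement for $g_{n-1}$ rather than $g_{n-1}^{-1}$, so one must manufacture the inverse version by the conjugation trick above. Once that identity and its two $h_n$-- and $e_n$--decorated companions are recorded, the remaining work is bookkeeping: choosing, for each of the several equal expressions in (i)--(v), the order in which to apply (\ref{new4})--(\ref{new9}) and (\ref{bmw8})--(\ref{bmw10}), and being careful that a tie of index $n$ commutes with $g_{n-1}^{\pm1}$ only in the combination $e_{n-1}e_n$, never on its own.
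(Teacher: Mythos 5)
Your proof is correct, and its computational skeleton is the same as the paper's: expand the tied--tangle generators via (\ref{new4}), (\ref{new6}), (\ref{new7}) and (\ref{new9}), collapse the tangle products with (\ref{bmw8})--(\ref{bmw10}), and then relocate the ties next to $g_{n-1}^{-1}$. The genuine difference is where the tie--relocation is justified, and here your version is the more careful one. At those steps (items (i), (iii), (v)) the paper simply cites (\ref{bt5}), but what is actually used there is the sandwiched identity $e_n g_{n-1}^{-1} e_n = e_{n-1}e_n g_{n-1}^{-1} = g_{n-1}^{-1}e_{n-1}e_n$, i.e.\ an \emph{inverse} form of (\ref{bt5}) that is not literally among the defining or derived relations: relation (\ref{bt5}) concerns $g_{n-1}$, and conjugation only yields the commutation of the pair $e_{n-1}e_n$ with $g_{n-1}^{-1}$, not the sandwiched form. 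You isolate exactly this point, and your derivation --- writing $e_n = g_{n-1}^{-1}g_n^{-1}e_{n-1}g_n g_{n-1}$ from (\ref{bt6}), then cancelling the braid factors with (\ref{bt3}) and the two pair--commutations from (\ref{bt5}) --- is a correct and clean way to legitimize it; an alternative route is to expand $g_{n-1}^{-1}=z(e_{n-1}+f_{n-1})-g_{n-1}$ by (\ref{new14}) and use (\ref{new8}), (\ref{new9}). So your write--up proves slightly more than the paper's: it supplies the missing justification for the steps the paper attributes to (\ref{bt5}), and the two $h_n$--decorated companions you record are exactly what items (i)--(v) consume. One tiny bookkeeping remark: for the second expression in item (ii), $g_n f_{n-1}g_n e_n$, the reduction to the first expression also needs the commutation $f_{n-1}e_n = e_n f_{n-1}$ from (\ref{new9}), not only (\ref{bt3}), (\ref{bt2}) and (\ref{new1}); since you invoke (\ref{new9}) in that item anyway, this is cosmetic.
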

\begin{proof}
We prove  (i). We have
\begin{eqnarray*}
h_n f_{n-1} g_n & =  & h_nh_{n-1}e_ng_n \quad 	\text{from (\ref{new7})}\\
& = &
h_nh_{n-1}g_ne_n \quad 	\text{from (\ref{bt3})}\\
& = &
h_ng_{n-1}^{-1}e_n \quad 	\text{from (\ref{bmw10})}\\
& = &
h_ne_ng_{n-1}^{-1}e_n \quad 	\text{from (\ref{new1})}\\
& = &
h_ne_ne_{n-1}g_{n-1}^{-1} \quad 	\text{from (\ref{bt5})}\\
& = &
h_ne_{n-1}g_{n-1}^{-1} \quad 	\text{from (\ref{new1})}.
\end{eqnarray*}
In the same way we prove that the second term is equal to the third  term.

We prove (ii):
\begin{eqnarray*}
g_ne_n f_{n-1}g_n & = & g_n e_nh_{n-1}e_n g_n \quad 	\text{from (\ref{new9})}\\
& = &
 g_ne_ne_{n-1}h_{n-1}e_{n-1}e_ng_n \quad 	\text{from (\ref{new1})}\\
& = &
 e_ne_{n-1}g_n h_{n-1}g_ne_n e_{n-1} \quad 	\text{from (\ref{bt5})}\\
 & = &
 e_ne_{n-1}g_{n-1}^{-1}h_ng_{n-1}^{-1}e_n e_{n-1} \quad 	\text{from (\ref{bmw8})}\\
 & = &
g_{n-1}^{-1} e_ne_{n-1}h_ne_n e_{n-1}g_{n-1}^{-1} \quad 	\text{from (\ref{bt5})}\\
& = &
g_{n-1}^{-1} e_{n-1}h_n e_{n-1}g_{n-1}^{-1} \quad 	\text{from (\ref{new1}) and (\ref{bt1}). }
\end{eqnarray*}
In the same way we prove that the second term is equal to the fourth term. By using  (\ref{new9}), (\ref{bt4}) and (\ref{bt2}) we get that the  third term is equal to the first:
$e_ng_n f_{n-1} e_ng_n = e_ng_ne_n f_{n-1}g_n=  e_ng_n f_{n-1}g_n$.

We prove now (iii). We have:
\begin{eqnarray*}
f_nf_{n-1}g_n & = & e_{n-1}h_nh_{n-1}e_ng_n\quad 	\text{from (\ref{new4}) }\\
& = &
 e_{n-1}h_nh_{n-1}g_ne_n\quad 	\text{from (\ref{bt3}) }\\
 & = &
 e_{n-1}h_ng_{n-1}^{-1}e_n\quad 	\text{from (\ref{bmw10}) }\\
  & = &
 e_{n-1}h_ne_ng_{n-1}^{-1}e_n\quad 	\text{from (\ref{new1}) }\\
  & = &
 e_{n-1}h_ne_ne_{n-1}g_{n-1}^{-1}\quad 	\text{from (\ref{bt5}) }\\
 & = &
 e_{n-1}h_ne_{n-1}g_{n-1}^{-1}\quad 	\text{from (\ref{new1}). }
\end{eqnarray*}
I.e., the first term is equal to the third. Now, by using this equality we have $f_nf_{n-1}g_ne_n =e_{n-1}h_ne_{n-1}g_{n-1}^{-1}e_n$; but, by using  (\ref{bt5}) we have $e_{n-1}g_{n-1}^{-1}e_n=e_ne_{n-1}g_{n-1}^{-1} $, hence
$$
f_nf_{n-1}g_ne_n = e_{n-1}h_ne_ne_{n-1}g_{n-1}^{-1} = e_{n-1}h_ne_{n-1}g_{n-1}^{-1},
$$
where the last equality is from (\ref{new1}).

In analogous way we obtain (iv).

For (v), we have
\begin{eqnarray*}
g_n f_{n-1}h_n & = & g_ne_nh_{n-1}h_n \quad \text{from (\ref{new6})}\\
& = & e_ng_nh_{n-1}h_n \quad \text{from (\ref{bt3})}\\
& = & e_ng_{n-1}^{-1}h_n \quad \text{from (\ref{bmw9})}\\
& = & e_ng_{n-1}^{-1}e_nh_n \quad \text{from (\ref{new1})}\\
& = &g_{n-1}^{-1} e_{n-1}e_nh_n \quad \text{from (\ref{bt5}).}
\end{eqnarray*}
Thus, by using now  (\ref{new1}) we obtain $ g_n f_{n-1}h_n=g_{n-1}^{-1} e_{n-1}h_n $.
Finally, this last equality,  (\ref{bt5}) and  (\ref{new1}) implies:
$$
e_ng_n f_{n-1}h_n = e_ng_{n-1}^{-1} e_{n-1}h_n =g_{n-1}^{-1} e_{n-1}e_nh_n  =g_{n-1}^{-1} e_{n-1}h_n.
$$

Relation (\ref{new14}) implies that the words in  (v) can be reduced.
\end{proof}

\begin{lemma}\label{L4}
The following relations hold in $\mathcal{K}_n$:
\begin{enumerate}
\item[(i)] $h_ng_{n-1}e_n = h_n e_{n-1}g_{n-1}$,
\item[(ii)] $e_ng_{n-1}h_n = g_{n-1}e_{n-1}h_n$,
\item[(iii)] $e_n g_{n-1}f_n = g_{n-1}e_{n-1}f_n$,
\item[(iv)] $g_ne_ng_{n-1}e_n = e_{n-1}g_ng_{n-1}e_{n-1} $,
\item[(v)] $g_n e_n g_{n-1}g_ne_n= g_{n-1}g_ne_ne_{n-1}g_{n-1}$,
\item[(vi)] $g_ne_ng_{n-1}f_n = f_{n-1}g_ng_{n-1} e_{n-1}$.
\end{enumerate}

\end{lemma}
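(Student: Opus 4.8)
The plan is to derive each of the six identities purely formally from the defining relations, assisted by the $f$--relations of Lemma \ref{ij=1}, by rewriting the two sides of each identity until they meet at a common word. The six split naturally into three groups: (i)--(iii) share one elementary mechanism, (iv)--(v) are settled by reducing both sides to a fixed normal form, and (vi) is deduced from (iii).

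For (i)--(iii) the key observations are that $h_n$ and $f_n$ each absorb an adjacent $e_n$ (by \eqref{new1} and \eqref{new8} respectively), while \eqref{bt5} with indices $i=n-1$, $j=n$ supplies the rewriting $e_ng_{n-1}e_n = g_{n-1}e_{n-1}e_n = e_{n-1}e_ng_{n-1}$. Thus for (i) I would write $h_ng_{n-1}e_n = h_ne_ng_{n-1}e_n$ (inserting $e_n$ on the right of $h_n$), apply \eqref{bt5} to the block $e_ng_{n-1}e_n$, and finish with \eqref{bt1} and \eqref{new1}: $h_n(e_{n-1}e_ng_{n-1}) = h_ne_ne_{n-1}g_{n-1} = h_ne_{n-1}g_{n-1}$. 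Relations (ii) and (iii) are the mirror situation, with the absorbing factor on the right; here inserting $e_n$ on its left and applying \eqref{bt5} lands directly on $g_{n-1}e_{n-1}e_nh_n = g_{n-1}e_{n-1}h_n$ (respectively with $f_n$ and \eqref{new8}), with no reordering needed.

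For (iv) and (v) I would reduce both sides to a single word. In (iv) both $g_ne_ng_{n-1}e_n$ and $e_{n-1}g_ng_{n-1}e_{n-1}$ should collapse to $g_ng_{n-1}e_{n-1}e_n$: the left side via \eqref{bt5} applied to $e_ng_{n-1}e_n$, the right side via \eqref{bt6} (which gives $e_{n-1}g_ng_{n-1} = g_ng_{n-1}e_n$) followed by \eqref{bt1}. In (v) I would first use \eqref{bt3} and the braid relation \eqref{b2} to turn $g_ne_ng_{n-1}g_ne_n$ into $e_ng_{n-1}g_ng_{n-1}e_n$, then apply \eqref{bt6} and \eqref{bt3} to reach $g_{n-1}g_ng_{n-1}e_{n-1}e_n$; the right side $g_{n-1}g_ne_ne_{n-1}g_{n-1}$ reaches the same word by \eqref{bt1} followed by \eqref{bt5}.

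Finally, (vi) I would deduce from (iii). Relation (iii) gives $e_ng_{n-1}f_n = g_{n-1}e_{n-1}f_n$, so left--multiplying by $g_n$ rewrites the left side of (vi) as $g_ng_{n-1}e_{n-1}f_n$. For the right side, Lemma \ref{ij=1}(ii) in the form $f_{n-1}g_ng_{n-1} = g_ng_{n-1}f_n$, together with the commutation $f_ne_{n-1} = e_{n-1}f_n$ from \eqref{new9}, rewrites $f_{n-1}g_ng_{n-1}e_{n-1}$ as $g_ng_{n-1}f_ne_{n-1} = g_ng_{n-1}e_{n-1}f_n$; the two sides then agree. The step I expect to be the real obstacle is (v): it is the only identity mixing two distinct braid generators with two ties, so one must commit in advance to the target word $g_{n-1}g_ng_{n-1}e_{n-1}e_n$ and verify that the non--commuting moves \eqref{bt5}, \eqref{bt6} and the braid relation \eqref{b2} can be chained in the right order on each side without producing stray terms.
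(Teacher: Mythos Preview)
Your argument is correct and, for parts (i)--(v), follows essentially the same route as the paper: the absorption trick with \eqref{new1}/\eqref{new8} plus \eqref{bt5} for (i)--(iii), and the shuffle via \eqref{bt3}, \eqref{bt5}, \eqref{bt6}, \eqref{b2} for (iv)--(v). Your handling of (vi) is actually cleaner than the paper's: the paper inserts a $g_n$ using \eqref{new10} (picking up a scalar $a^{\pm 1}$), then pushes through \eqref{bt6}, \eqref{b2}, \eqref{new9}, Lemma~\ref{ij=1}(ii) and \eqref{new10} again to cancel the scalar, whereas you avoid the scalar entirely by reusing the already-proved (iii) and invoking Lemma~\ref{ij=1}(ii) together with the commutation $f_ne_{n-1}=e_{n-1}f_n$ from \eqref{new9}.
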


\begin{proof}
 By using (\ref{new1}), we have $h_ng_{n-1}e_n = h_ne_ng_{n-1}e_n$, then from (\ref{bt5}), we obtain
$h_ng_{n-1}e_n = h_ne_ne_{n-1}g_{n-1}$, and using again (\ref{new1}), we obtain (i).
Analogously,  we get (ii).

For (iii)  we have:  $e_ng_{n-1}f_n= e_n g_{n-1}e_n f_n =  g_{n-1}e_{n-1}e_n f_n  $ since  (\ref{new8}) and (\ref{bt5}). Hence,  (iii)  follows from    (\ref{new8}).

To  obtain (iv),  we  use  (\ref{bt5}) and  (\ref{bt6}).

For  (v),  in  virtue of  (\ref{bt3}), we deduce
\begin{eqnarray*}
g_ne_ng_{n-1}g_ne_n
  & = &
 g_ng_{n-1}e_ne_{n-1}g_n \qquad \text{from} \quad (\ref{bt6})\\
& = &
 g_ng_{n-1}g_ne_ne_{n-1} \qquad \text{from} \quad (\ref{bt5})\\
& = &
 g_{n-1}g_ng_{n-1}e_ne_{n-1} \qquad \text{from} \quad (\ref{b2}).
 \end{eqnarray*}
 Then (v)  follows  now  from  (\ref{bt5}).

To  obtain (vi),  observe  that relation  (\ref{new10}) implies  $g_ne_ng_{n-1}f_n = a^{-1}g_ne_ng_{n-1}g_nf_n$. By  relations
 (\ref{bt6}) and (\ref{b2}) we get
 $g_ne_ng_{n-1}f_n = a^{-1}g_{n-1}g_ng_{n-1} e_{n-1}f_n$; now   using on this last factor (\ref{new9}), (ii) Lemma \ref{ij=1} and (\ref{new10}), we deduce
  (vi).

\end{proof}

\begin{proof}[Proof of Proposition \ref{finitedimensional}]
The proof is by  induction on $n$. For $n=2$,  the proposition  follows directly from the relations (\ref{bmw1}), (\ref{bmw4}), (\ref{bt2}), (\ref{new1}), (\ref{new3}), (\ref{new5}), (\ref{new8}), (\ref{new10}) and (\ref{quadratic}); thus we  assume now that $n>2$. Every  element in ${\mathcal K}_n$ can be written as a linear combination of elements  in  the form $w=w_1d_1w_2d_2\cdots w_kd_k $, where  $w_i\in \mathcal{K}_{n-1}$ and $d_i\in \Gamma_n$.     Now we  shall  use induction on $k$ and   we see that it is enough to consider  $k=2$.    Hence, we are going to prove that for $w=w_1d_1 w_2d_2$ the proposition holds. By the induction hypothesis we have  that $w_2=  \omega_1d\omega_2$, where $\omega_i\in  {\mathcal K}_{n-2}$ and $d\in \Gamma_{n-1}$; then, $w = w_1\omega_1 d_1dd_2\omega_2$. Thus, to finish the proof of the proposition, we need only to see that $d_1dd_2$ can be reduced in $\mathcal{K}_n$.

Trivially $d_1dd_2$ is reduced if $d_1$ or $d_2$ are equal to  $1$. So, we consider now  $d_i\not=1$.

For the case $d=1$, we use again (\ref{bmw1}), (\ref{bmw4}), (\ref{bt2}), (\ref{new1}), (\ref{new3}), (\ref{new5}), (\ref{new8}), (\ref{new10}) and (\ref{quadratic}), to obtain $d_1dd_2$ in the   reduced form.
As for  the remaining  $125$ cases to reduce,   we observe that the  50 cases   obtained when $d=g_{n-1}$ and $d= f_{n-1}$ are the most  representatives.  We  omit  the  analysis of  the  remaining  cases  since  they can  be obtained  either in an  analogous  manner or directly  from  the  algebra relations.

 In the tables of reduction below   we put in the first line the possibilities of $d_1$ and in the first column the possibilities of $d_2$ in the product $d_1dd_2$ and  we will indicate  how  to reduce  the product $d_1dd_2$.

For the case $d=g_{n-1}$, we have the following reduction table.

{\small

$$
\begin{array}{c|ccccc}
         & g_n  & h_n & e_n &  g_ne_n & f_n  \\
   \hline
g_n     & (\ref{b2})  & (\ref{bmw7}) & (\ref{bt6}) & (\ref{bt6}) \, \text{and} \, (\ref{b2}) & \, \text{(ii) Lemma \ref{ij=1}} \\
h_n     &  (\ref{bmw7}) &  (\ref{bmw5}) & \text{(i) Lemma \ref{L4}} & \text{(i) Lemma \ref{reductions1}} & (\ref{new11}) \\
e_n     &  (\ref{bt6})  & \text{(ii) Lemma \ref{L4}} & (\ref{bt5})&  (\ref{bt6})   & \text{(iii) Lemma \ref{L4}} \\
g_ne_n  &  (\ref{bt6}) \, \text{and} \, (\ref{b2})   & \text{(ii) Lemma \ref{reductions1}}  & \text{(iv) Lemma \ref{L4}}   & \text{(v) Lemma \ref{L4}} & \text{(vi) Lemma \ref{L4}} \\
f_n     & \text{(ii) Lemma \ref{ij=1}}  &  (\ref{new12})& (\ref{new8})  \, \text{and} \, (\ref{bt5})&  \text{(ii) Lemma \ref{ij=1}\, and}\, (\ref{new9})& (\ref{new12}) \\
\end{array}
$$
}

For the case $d=f_{n-1}$, the  reduction table is the following.
{\small
$$
\begin{array}{c|ccccc}
 & g_n  & h_n & e_n &  g_ne_n & f_n  \\
   \hline
g_n     &  (\ref{new13a}) \, \text{and} \, (\ref{new14})    &  \text{(v) Lemma \ref{reductions2}}& (\ref{new9})  &   \text{(ii) Lemma \ref{reductions2}} & \text{(iv) Lemma \ref{reductions2}} \\
h_n    &  \text{(i) Lemma \ref{reductions2}} &  \text{(vii) Lemma \ref{ij=1}  } & (\ref{new9}) \,\text{and}\,(\ref{new1})&\text{(i) Lemma \ref{reductions2}} & \text{(iv) Lemma \ref{ij=1}}\\
e_n     & (\ref{new9})  &  \text{(vi) Lemma \ref{ij=1}} & (\ref{new9}) &  (\ref{new9})  &  (\ref{new9}) \,\text{and}\,(\ref{new8}) \\
g_ne_n  &  \text{(ii) Lemma \ref{reductions2}}  &   \text{ (v) Lemma \ref{reductions2}} & (\ref{new9}) & \text{(ii) Lemma \ref{reductions2}}    & \text{(iv) Lemma \ref{reductions2}} \\
f_n     &  \text{(iii) Lemma \ref{reductions2}}  &     \text{(iii) Lemma \ref{ij=1}  }&  \text{(v) Lemma \ref{ij=1}} & \text{(v) Lemma \ref{ij=1} and }\,(\ref{new10}) & \text{ (i) Lemma \ref{ij=1}}
\end{array}
$$
}
\end{proof}

\section{A trace  for  the t--BMW algebra}\label{sec6}
The  fact, stated in  Proposition \ref{finitedimensional},  that every element in  ${\mathcal K}_n$  can be reduced,
and  the  existence of  the  invariant  $\L$ (Theorem \ref{TH1}), allow us  to  prove   that the  t--BMW algebra  supports  a  Markov trace (see next Theorem \ref{Thtrace}).  That is, we  prove the existence of  an unique
family $\varpi =\{\varpi_n\}_{n\geq 1}$, where  $\varpi_{n+1}: \K_{n+1}\rightarrow {\Bbb K}$ is a linear map defined  from $\varpi_n$ and   its values on $\alpha g_n$, $\alpha e_n$, $\alpha h_n$ and $\alpha f_n$,  for any  $\alpha \in \K_n$.  We  finish  this  section with an  algebraic  proof  that  the Kauffman polynomial $\widehat L$  is  a  specialization of  the  polynomial $\widehat \L$.

\subsection{}

Let $\alpha$  be  a word in $\K_n$. In what  follows  we  call  for short {\it closure of } $\alpha$,  denoted  by  $\widehat \alpha$,  the  closure  of  the  image $\chi_n(\alpha)$ in  ${\mathcal T}_n$.   Observe that $\widehat \alpha$ is the diagram of an unoriented  tied  link.
To prove the  existence of  a Markov  trace  on the t--BMW algebra  we  need  the  following  lemma.

\begin{lemma}\label{lemmaX}  If  the  word $\alpha \in  \K_n$  can be  written,  using  the  algebra  relations,  as  a linear combination  of  words  $\beta_i$, i.e.,  $\alpha = \sum_{i=1}^m k_i \beta_i$,  where   $k_i \in  \mathbb K$, then
$$   \L(\widehat \alpha)= \sum_i k_i \L(\widehat \beta_i). $$
\end{lemma}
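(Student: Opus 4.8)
The plan is to regard $w\mapsto\L(\widehat w)$ as a function on the diagram set $W_n$, to extend it ${\Bbb K}$--linearly to ${\Bbb K}W_n$, and to prove that it descends to the quotient $\mathcal{W}_n\cong\K_n$. This descent is exactly the assertion of the lemma: if the word $\alpha$ and the combination $\sum_i k_i\beta_i$ represent the same element of $\K_n$, the descended map assigns them a common value, giving $\L(\widehat\alpha)=\sum_i k_i\L(\widehat{\beta_i})$. Since the defining relations of $\K_n$ generate a two--sided ideal, it suffices to check, for each defining relation written as $r=\sum_j c_j r_j$ (with $r$ a single word and the $r_j$ words) and for arbitrary words $p,q$ serving as left and right context, that
\[
\L\bigl(\widehat{p\,r\,q}\bigr)=\sum_j c_j\,\L\bigl(\widehat{p\,r_j\,q}\bigr).
\]
The derived relations (\ref{quadratic}), (\ref{new3})--(\ref{new13}) need not be treated separately, as they are consequences of the defining ones.

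First I would record the locality principle that turns this into a finite check. Diagram multiplication is concatenation and closure joins the $n$ top endpoints to the $n$ bottom endpoints along the side of the rectangle; hence $\widehat{p\,r\,q}$ and each $\widehat{p\,r_j\,q}$ are tied link diagrams that are literally identical outside a single disc, inside which they display the tangle of $r$, respectively of $r_j$, on the strands of index $i,i+1$, while the closing arcs and the pieces coming from $p$ and $q$ remain untouched. So the displayed equality is an instance of a local rule for $\L$, and it is enough to match each defining relation with the rules of Theorem \ref{TH1}.

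For the monomial defining relations --- the braid relations (\ref{b1}), (\ref{b2}), the monomial BMW relations (\ref{bmw2}), (\ref{bmw3}), (\ref{bmw6})--(\ref{bmw10}), the tied braid relations (\ref{bt1})--(\ref{bt6}), and the monomial new relations (\ref{new1}), (\ref{new2}), (\ref{new8}), (\ref{new9}), (\ref{new13a}) --- the two sides are regular tie--isotopic tied tangles by Remark \ref{trisot}, so upon closure they give regular tie--isotopic tied links and $\L$ agrees on them by Theorem \ref{TH1} (rule (iii) together with the mobility of the ties). There remain the five non--monomial relations. Relation (\ref{bmw1}) closes to a diagram differing from $\widehat{h_i}$ by a disjoint untied circle, which is rule (ii), $\L(D\con\bigcirc)=x^{-1}\L(D)$. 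The kink relation (\ref{bmw5}) and the scalar equalities $h_ig_i=a^{-1}h_i$ in (\ref{bmw4}) and $g_if_i=a^{-1}f_i$ in (\ref{new10}) each create a positive or negative curl on one strand and are therefore governed by rules (iv) and (v), producing the factors $a^{\pm1}$. Finally, relation (\ref{new14}), $g_i+g_i^{-1}=z(e_i+f_i)$, is the algebraic transcription of the skein rule (vi): closing the four local tangles sends $g_i$ to $D_+$, $g_i^{-1}$ to $D_-$, $e_i$ to $D_e$ and $f_i$ to $D_f$, so the identity becomes precisely $\L(D_+)+\L(D_-)=z\bigl(\L(D_e)+\L(D_f)\bigr)$.

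Having verified the local identity for every defining relation in every context, $\L(\widehat{\,\cdot\,})$ annihilates the ideal of relations and hence descends to $\K_n$, which proves the lemma; equivalently, one reads a given equality $\alpha=\sum_i k_i\beta_i$ as a finite sequence of single relation--applications and propagates the $\L$--identity step by step using ${\Bbb K}$--linearity. The main obstacle I expect is not the case analysis itself but making the locality principle airtight --- certifying that substituting the right--hand side of a relation for a subword really alters the closed diagram only inside a disc on which the stated rules of Theorem \ref{TH1} apply --- and, within that, getting the identification in the skein case (\ref{new14}) exactly right, so that $g_i,g_i^{-1},e_i,f_i$ correspond respectively to $D_+,D_-,D_e,D_f$.
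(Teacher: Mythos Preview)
Your proposal is correct and follows essentially the same route as the paper: reduce to the defining relations, observe that the monomial ones give regular tie--isotopic closures (this is Remark~\ref{trisot}), and match each of the five non--monomial relations (\ref{bmw1}), (\ref{bmw4}), (\ref{bmw5}), (\ref{new10}), (\ref{new14}) with the corresponding rule of Theorem~\ref{TH1}. Your ideal--descent framing and the explicit locality principle are more carefully stated than the paper's version, which simply says the verification ``does not entail any difficulty,'' but the underlying argument is the same.
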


\begin{proof}  Observe  that in  $\K_n$ every  splitting  of a  word  in  a  linear  combination  of  words  is  done  by  using  the basic  relations (\ref{b1})--(\ref{new14})  of  the  algebra. Therefore,   we  have  firstly to  prove  that  for every  monomial  relation  in  $\K_n$  of  type
     $ \alpha= \alpha'$,  the  diagrams   $\widehat \alpha$  and   $\widehat \alpha'$  are  regularly isotopic,  so that  the  value  of  the  polynomial $\L$ coincides on  them.  Observe  that  relations  (\ref{bmw1}),(\ref{bmw4}),(\ref{bmw5}) and (\ref{new10}),  are  considered  non monomial  for the  presence  of  a  coefficient  different  from  1.  Secondly, we have to  prove that for  every  non  monomial basic relations  of  type  $\alpha=\sum_i k_i \beta_i$,  we  have  $\L(\widehat \alpha)= \sum_i k_i \L(\widehat \beta_i)$.     The  proof  consists  in  a  verification  relation  by  relation  and  does not entail  any difficulty.
\end{proof}

We define $\varpi_1$ as the identity; for $n>1$,   $\varpi_n$  is given in  the following theorem.

  \begin{theorem}\label{Thtrace}
 The t--BMW algebra supports a  unique  Markov trace $\varpi=\{\varpi_n\}_{n\geq 1}$, where for every positive integer  $n>1$ the  linear map $\varpi_n: \mathcal{K}_n \longrightarrow {\Bbb K}$ is  defined   by the following rules (recall  that by  definition  $y=(a+a^{-1})z^{-1}-1$):

\begin{enumerate}
\item[(i)]$\varpi_n(1)=1$,
\item[(ii)] $\varpi_n( \alpha \ \beta)=\varpi_n (\beta\  \alpha)$,
\item[(iii)] $\varpi_{n+1}(\alpha g_n)=\varpi_{n+1}(\alpha g_ne_n)= (x/a) \ \varpi_{n}(\alpha)$,
\item[(iv)] $\varpi_{n+1}(\alpha h_n)=\varpi_{n}(\alpha f_n)= x \ \varpi_{n}(\alpha)$,
\item[(v)] $\varpi_{n+1}(\alpha e_n)= \ x\ y  \ \varpi_{n}(\alpha) $,
\end{enumerate}
where  $\alpha, \beta \in \K_n$.
\end{theorem}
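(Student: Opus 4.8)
The plan is to construct $\varpi$ by the Jones recipe and use the invariant $\L$ to guarantee well-definedness. First I would establish \emph{existence}. The idea is to define $\varpi_n(\alpha)$ directly in terms of the Kauffman polynomial for tied links: for a word $\alpha\in\K_n$, set
\begin{equation*}
\varpi_n(\alpha) := \left(\frac{x}{a}\right)^{-\mathrm{exp}(\alpha)}\, x^{-(n-1)}\, \L(\widehat\alpha),
\end{equation*}
where $\widehat\alpha$ is the closure introduced above and $\mathrm{exp}(\alpha)$ is the exponent sum (counting $g_i^{\pm1}$ as $\pm1$ and the $e_i,h_i,f_i$ as $0$). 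By Lemma \ref{lemmaX} this is well defined on $\K_n$: whenever $\alpha=\sum_i k_i\beta_i$ by the algebra relations, $\L(\widehat\alpha)=\sum_i k_i\L(\widehat\beta_i)$, and since every defining relation preserves $\mathrm{exp}$ (each non-monomial relation is homogeneous of exponent $0$ on both sides), the normalizing prefactor is the same across the relation, so $\varpi_n$ descends to a well-defined $\K$-linear map. Linearity is then immediate from linearity of $\L$ in Lemma \ref{lemmaX}, and (i) holds because $\widehat{1}$ is the $n$-component unlink, whose $\L$-value is $x^{-(n-1)}$ by \eqref{Oct}.

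Next I would verify rules (ii)--(v) for this $\varpi$. The trace property (ii) follows from the fact that closures of conjugate elements are regularly isotopic: $\widehat{\alpha\beta}$ and $\widehat{\beta\alpha}$ are the same tied link diagram up to moving the closure strands around, and $\mathrm{exp}(\alpha\beta)=\mathrm{exp}(\beta\alpha)$, so the normalizations agree and $\L$ gives equal values. For (iii)--(v), the Markov-type conditions, one translates each to a statement about how closing $\widehat{\alpha g_n}$, $\widehat{\alpha h_n}$, etc. relates to closing $\widehat\alpha$ at level $n$. Diagrammatically, $\widehat{\alpha g_n}$ differs from $\widehat\alpha$ (viewed in $\K_{n+1}$) by a single positive Reidemeister-I kink on the last strand together with the extra closure arc; applying rule (iv) of Theorem \ref{TH1} gives a factor $a$ from the kink, and the change in the normalization $(x/a)^{-\mathrm{exp}}x^{-n}$ accounts for the remaining factor, yielding $\varpi_{n+1}(\alpha g_n)=(x/a)\varpi_n(\alpha)$. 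For (iv) the closure of $\alpha h_n$ caps off the last strand with a tangle-type arc, and rule (ii) of Theorem \ref{TH1} (the disjoint-circle relation, producing the $x^{-1}$) gives the factor $x$; for the $\alpha f_n$ case the tied circle produces the factor $y$ via \eqref{tcon}, and I must check these combine correctly with the prefactor to give exactly $x$. Rule (v) is the tied version: closing $\alpha e_n$ adds a circle tied to the rest, contributing $x\,y$ after \eqref{tcon} and \eqref{Oct}. The identity $\varpi_{n+1}(\alpha g_ne_n)=\varpi_{n+1}(\alpha g_n)$ in (iii) follows because tying the closure circle to an already-merged component is an inessential tie, leaving $\L$ unchanged.

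For \emph{uniqueness}, I would argue inductively on $n$. Proposition \ref{finitedimensional} shows every element of $\K_{n+1}$ is a linear combination of words $w_1 d w_2$ with $w_1,w_2\in\K_n$ and $d\in\Gamma_{n+1}=\{1,g_n,g_ne_n,h_n,e_n,f_n\}$; using the trace property (ii) to cyclically move $w_2$ to the front, every element reduces to a combination of terms $\alpha\,d$ with $\alpha\in\K_n$ and $d\in\Gamma_{n+1}$. Rules (iii)--(v) then express $\varpi_{n+1}$ on each such term in terms of $\varpi_n$, so $\varpi_{n+1}$ is completely determined once $\varpi_n$ is, and $\varpi_1$ is fixed as the identity. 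Hence any Markov trace satisfying (i)--(v) must coincide with the one we constructed.

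I expect the main obstacle to be rule (iv), specifically the need to show that the two closures $\widehat{\alpha h_n}$ and $\widehat{\alpha f_n}$ produce the \emph{same} value $x\,\varpi_n(\alpha)$ despite $h_n$ and $f_n$ having different diagrammatic interpretations (the latter carries a tie). The reconciliation hinges on the interplay between relation \eqref{tcon}, the definition of $y$, and the normalization constant $x^{-(n-1)}$; one must track carefully how closing a tied-tangle generator converts the tie into an inessential one (so that it contributes the circle factor $x^{-1}$ rather than $y$) once the last strand is capped. The other delicate point is confirming that the normalizing exponent $\mathrm{exp}$ is genuinely a relation-invariant homomorphism on the monoid of words, which is what makes the closure-based definition consistent and is precisely the place where the homogeneity of the quadratic and BMW relations must be checked.
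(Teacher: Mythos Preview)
Your existence argument has a genuine error: the prefactor $(x/a)^{-\exp(\alpha)}$ does not make sense on $\K_n$, because $\exp$ is \emph{not} preserved by the defining relations of the algebra. For instance relation~\eqref{bmw4} reads $g_ih_i=a^{-1}h_i$, with exponent $1$ on the left and $0$ on the right; relation~\eqref{bmw7} gives $g_ig_jh_i=h_jh_i$, exponents $2$ versus $0$; relation~\eqref{bmw9} gives $g_ih_jh_i=g_j^{-1}h_i$, exponents $1$ versus $-1$. So the claim ``every defining relation preserves $\exp$'' is false, Lemma~\ref{lemmaX} alone does not compensate for the mismatched prefactors, and your $\varpi_n$ is not a well-defined linear map. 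You also have the power of $x$ inverted: with $x^{-(n-1)}$ and $\L(\widehat{1})=x^{-(n-1)}$ one gets $\varpi_n(1)=x^{-2(n-1)}\neq 1$.

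The paper's construction is simply to drop the exponent factor and use the correct sign: it sets $\omega_n(\alpha)=x^{\,n-1}\L(\widehat\alpha)$. Then well-definedness is exactly Lemma~\ref{lemmaX}, with nothing further to check. The factor $a^{-1}$ needed for rule~(iii) is supplied not by an external normalization but by $\L$ itself, via the Reidemeister~I curl created when $g_n$ is closed (rule~(v) of Theorem~\ref{TH1} gives $\L(\widehat{\alpha g_n})=a^{-1}\L(\widehat\alpha)$, hence $\omega_{n+1}(\alpha g_n)=x^{n}a^{-1}\L(\widehat\alpha)=(x/a)\,\omega_n(\alpha)$). For rule~(iv), both closures $\widehat{\alpha h_n}$ and $\widehat{\alpha f_n}$ are regularly tie-isotopic to the $n$-strand closure $\widehat\alpha$ --- the tie carried by $f_n$ becomes inessential once the cap and cup are joined by the closure arc --- so no factor of $y$ appears there; the factor $x$ is just the shift $x^{n}/x^{n-1}$ in the prefactor. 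You essentially identify this last point in your ``main obstacle'' paragraph, but your earlier description (producing $y$ from \eqref{tcon} for $\alpha f_n$) is the wrong picture.

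Your uniqueness argument via Proposition~\ref{finitedimensional} and cyclic rearrangement is correct, and is in fact more explicit than what the paper writes.
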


\begin{proof}
Given  any  $\alpha  \in  \K_n$,  let  $\widehat \alpha$ be  the  diagram  of the unoriented tied link  obtained  as  the  closure of  $\chi_n(\alpha)$. For  every positive integer $n$,  we define $\omega_n$ by:

\begin{equation}\label{phi} \omega_n(\alpha)=  x^{n-1} \L(\widehat \alpha). \end{equation}

We will prove  that  $\omega_n$  satisfies    (i)--(v).  Since for  every  $n$,   $\omega_n$ is  uniquely defined,  we  obtain  that $\varpi_n=\omega_n$ for all  $n$.

We observe  firstly that  when   $\alpha=1 \in \K_n$, the   closure  of  $\alpha$  is  $\OO^n$  and,    according  to  Theorem \ref{TH1}, $\L(\OO^n)=x^{1-n}$.  So,  by the definition of $\omega_n$,  we  get   $\omega_n(1)=1$. Secondly,  the  closures  of  $\alpha \beta$  and  of  $\beta \alpha$  are  diagrams of  the  same  tied link; therefore,  by   (\ref{phi}),  $\omega_n(\alpha \beta)=\omega_n(\beta \alpha)$.  Hence  $\omega_n$  satisfies (i) and (ii).

The  proof   that  $\omega_n$  satisfies  (iii)--(v)   is  done  by  induction on  $n$.  For  $n=1$,  the algebra  $\K_1$   contains only  the  unit  element, i.e.,  the  trivial braid  composed  of  a  sole  thread;  its closure is  the   circle  $\OO^1$,  with  $\L(\OO^1)=1$,  so  that  $\omega_1(1)=1$.

We  now  suppose  that  $\omega_j$  satisfies  (iii)--(v) for  $j\le n$.   Let   $\delta \in  \K_{n+1}$.  By  Proposition  \ref{finitedimensional}, $\delta$ can  be  written  as
$$\delta=  \sum_{i=1}^m k_i \beta_i,$$
where, for  every  $i$,  $\beta_i\in \K_{n+1}$  contains  only  one  element $\gamma_i^{(n+1)}$ of  the  set  $\Gamma_{n+1}$, i.e.
           $$\beta_i=  \beta'_i \gamma^{(n+1)}_i \beta''_i,$$
          where  $\beta'_i ,  \beta''_i  \in  \K_n$.

Observe  that,  by Lemma \ref{lemmaX},  $   \L(\hat \delta)= \sum_{i=1}^m k_i \L(\hat \beta_i) $.
Now, for  every  $i$,  we  consider  the  element  $$\tilde \beta_i= \beta''_i \beta'_i\gamma_i^{(n+1)}.$$  This  element  has  the  same  closure  as  $\beta_i$  and  then  $\omega_{n+1}(\beta_i)= \omega_{n+1}(\tilde \beta_i)$.  Observe that $\tilde \beta_i$ is  the  product  of  an  element in  $\K_n$,  namely  $\beta''_i \beta'_i $,  by an  element  in $\Gamma_{n+1}$.
Therefore,  it is  sufficient to prove that  $\omega_{n+1}$   satisfies (iii)--(v) when  $\delta=\tilde\beta_i$.

Let  $\alpha \in \K_{n}$,  and  $\widehat \alpha$  its  closure,  so  that  \begin{equation*}\L(\widehat \alpha)= \omega_n(\alpha) / x^{n-1}. \end{equation*}

(iii).  The  closures of $\alpha g_{n}$  and  of  $\alpha g_n e_n$ are  different  from  $ \widehat \alpha$  for  the  presence  of  a  new loop,  and possibly of an  unessential tie.  Therefore,  by  Theorem \ref{TH1},
 \begin{equation}\label{T1} \L(\widehat {\alpha g_n})=\L(\widehat {\alpha g_n e_n})= a^{-1} \L(\widehat {\alpha} ) .\end{equation}  By   (\ref{phi})
 $$   \L(\widehat {\alpha g_n })=\omega_{n+1}(\alpha g_n) /x^{n}  \quad \text{and} \quad  \L(\widehat {\alpha g_n e_n}) = \omega_{n+1}(\alpha g_n e_n)/x^n  .$$
  Therefore    (\ref{T1}) implies
 $$ \omega_{n+1}(\alpha g_n)  =\omega_{n+1}(\alpha g_n e_n)= x a^{-1}\omega_n (\alpha) .$$

(iv). The  closures of $\alpha h_{n}$  and  of  $\alpha  f_n$  are  regularly  isotopic  to  the  closure of  $\alpha$, so  $\L(\widehat {\alpha h_n})=\L(\widehat {\alpha f_n})= \L(\widehat {\alpha} )$. By     (\ref{phi})
 $$   \L(\widehat {\alpha h_n})=\omega_{n+1}(\alpha h_n)/ x^{n} \quad \text{and} \quad  \L(\widehat {\alpha f_n})  = \omega_{n+1}(\alpha f_n)/x^n  .$$
 Hence
 $$ \omega_{n+1}(\alpha h_n)  =\omega_{n+1}(\alpha f_n)= x \omega_n (\alpha) .$$

 (v).  The  closure  of  $\alpha e_n$ is  obtained  from  the  closure  of $\alpha$  by  adding  a separated circle,  tied  to  $\widehat \alpha$.  By    (\ref{tcon}), \begin{equation}\label{T2}\L(\widehat {\alpha e_n})= y \L(\widehat {\alpha} ) .\end{equation}  By (\ref{phi}),
 $$   \L(\widehat {\alpha e_n})=\omega_{n+1}(\alpha e_n) x^{n}.  $$
Therefore  from (\ref{T2}) we obtain
$$ \omega_{n+1}(\alpha e_n)   = x y \omega_n (\alpha) .$$


 For  completeness,  we  consider also  the  case  when  $\gamma_i^{(n+1)}$ is  the  identity $1_n \in \K_{n+1}$.  In  this  case  the  closure  of  $\alpha \gamma_i^{(n+1)}$  is  obtained  from  the  closure  of  $\alpha$  by  adding  a  separated  circle.  Therefore,  by  theorem  \ref{TH1},  $\L(\widehat {\alpha 1_n})= \L(\widehat \alpha ) /x$.
 By    (\ref{phi}),  $\omega_{n+1}(\alpha 1_n)= \omega_n (\alpha)$.

\end{proof}

\begin{remark} \rm Observe  that \begin{equation}\label{traceiv} \varpi_{n+1}(\alpha g_n^{-1})=\varpi_{n+1}(\alpha g_n^{-1}e_n)= x \ a \  \varpi_{n}(\alpha).\end{equation}
Indeed, we  have   $$\alpha g_n^{-1}  = -\alpha  g_n + z (\alpha e_n + \alpha f_n),$$
and
$$\alpha g_n^{-1}e_n  = -\alpha  g_ne_n + z (\alpha e_n + \alpha f_n).$$
Now, by using (iii), (iv) and (v)  we  obtain:
 $$\varpi_{n+1}(\alpha g_n^{-1})=  \varpi_{n+1}(\alpha g_n^{-1})e_n  = -x/a \varpi_n(\alpha) + z( x \ y \varpi_n(\alpha) + x \varpi_n(\alpha)= x (-a^{-1} + z(y +1)  ) \varpi(\alpha).$$
   So,  $\varpi_n$  satisfies    (\ref{traceiv}), in virtue of    (\ref{y}).

\end{remark}

\begin{remark}\rm

Having present   (\ref{phi}) and the definition of $\widehat{\L}$, we
deduce that for the oriented  tied link  $\widehat{\alpha}$, with
$\alpha \in TB_n$, we have
$$\widehat{\L} (\widehat{\alpha} ) =
\left(\frac{1}{x}\right)^{n-1}a^{\rm exp(\alpha)}(\varpi_n \circ
\pi_n)(\alpha),$$
where $\pi_n$ is the representation of $TB_n$ defined by mapping
$\sigma_i$ to $g_i$ and $\eta_i$ to $e_i$; hence, in the setting of
the Jones recipe, the t--BMW algebra is the corresponding algebra to
define $\widehat{\L}$. Now, define  the tied Temperley--Lieb algebra,  denoted  t--${\rm TL}_{n}$,
as the subalgebra of $\mathcal{K}_n$ generated by the $h_i$'s, $e_i$'s,
and $f_i$'s. Notice that   this subalgebra in fact can be presented
abstractly through  relations (\ref{bmw1}), (\ref{bmw2}),
(\ref{bmw6}), (\ref{bt1}), (\ref{bt2}) and (\ref{new1})--(\ref{new9}).
Furthermore, note that   a natural  epimorphism from the tied
Temperley--Lieb algebra to the classical  Temperley--Lieb algebra is
simply  obtained   by  mapping   $e_i\mapsto 1$ and $  f_i\mapsto
h_i$. This epimorphism and the original construction of the Jones polynomial suggest  that the invariant ${\mathcal J}$ can  be  constructed through the Jones
recipe applied to  the algebra t--${\rm TL}_{n}$.
\end{remark}

\subsection{}\label{subsec6}

In this subsection we study  the factorization of $\varpi_n$ trough   the trace on the BMW--algebra,  then we re--prove  that  the Kauffman polynomial of oriented  links corresponds to a  specialization of $\widehat \L$.
\smallbreak

Let $\ell$ and $m$ be two commutative variables. The BMW algebra ${\mathcal C}_n = {\mathcal C}_n( \ell , m)$ was introduced  by J. Birman and H. Wenzl \cite[Section 2]{biweTAMS} and independently by J. Murakami \cite{muOJM}. This algebra is defined through  a presentation
with braid generators, tangles generators and  relations among them that are motivated by topological reasons.
We consider here the reduced presentation of this algebra  defined in \cite[Definition 1]{coetalJA}; more precisely, ${\mathcal C}_n$ is the ${\Bbb C}(\ell , m)$--algebra
 defined through braid generators $G_1, \ldots ,G_{n-1}$ and tangle generators $H_1, \ldots ,H_{n-1}$, subject to braid relations among the $G_i$'s and the following relations:
\begin{eqnarray}
G_iH_i &  = & \ell^{-1}H_i \qquad \text{for all $i$}, \\
H_iG_jH_i & = & \ell H_i  \qquad \text{for}\quad \vert i-j\vert =1, \\
G_i + G_i^{-1} & = & m(1 + H_i) \qquad \text{for all $i$}.
\end{eqnarray}

In \cite[Theorem 3.2]{biweTAMS}, cf. \cite[Subsection 9.6]{joCBMS}, is proved that   the family  $\{{\mathcal C}_n\}_{n\in {\Bbb Z}_{>0}}$ supports  a Markov trace   $\tau'=\{\tau_n^{\prime}\}_{n\in {\Bbb Z}_{>0}}$,  where the $\tau_n^{\prime}$'s are   linear maps defined uniquely by the axioms: $\tau_n^{\prime}(1) = 1$ and for all $c, d\in {\mathcal C}_n$, we have
$\tau_n^{\prime}(c \, d) = \tau_n^{\prime}(d \, c)$ and
\begin{eqnarray}\label{traceBMW}
\tau_{n+1}^{\prime}(c \, G_n) &= & \frac{m}{\ell(\ell + \ell^{-1} -m)}\tau_n^{\prime}(c),\\
\tau_{n+1}^{\prime}(c \, H_n)& =&\frac{m}{\ell + \ell^{-1} -m}\tau_n^{\prime}(c).
\end{eqnarray}
This Markov trace allows to define the Kauffman polynomial,  $\widehat{L}$, in terms purely algebraic. More precisely, for the link  $\overrightarrow{D}$ obtained as the closure  of  $\sigma\in B_n$, we have:
\begin{equation}\label{kauffmanpolynomial}
\widehat{L}_{m,l} (\overrightarrow{D}) =\left(\frac{\ell + \ell^{-1} -m}{m} \right)^{n-1}\ell^{w(D)}(\tau^{\prime}_n\circ \epsilon_n)(\sigma),
\end{equation}
where $\epsilon_n$ is the homomorphism from  $B_n$ to ${\mathcal C}_n$ such that  $\sigma_i\mapsto G_i$. Formula (\ref{kauffmanpolynomial}) is deduced by combining (23.1) and (24) of \cite{biweTAMS}, cf.  \cite[Subsection 9.6]{joCBMS}.

Now, by regarding the defining relations of the BMW--algebra and   relations (\ref{b1}), (\ref{b2}), (\ref{bmw4}) and  (\ref{bmw5}), we deduce that there exists an epimorphism from a specialization of the t--BMW algebra in the BMW--algebra; further, this epimorphism factorize   $\varpi$ by  $\tau^{\prime}$. More precisely, we obtain the following proposition, that  can  be  easily  verified.

\begin{proposition}\label{tracetBMWwithBMW}
Setting $\ell = a$,  $m=z$   and $x=\frac{z}{a + a^{-1}-z }$,  we have that the mappings $g_i\mapsto G_i$, $e_i\mapsto 1$ and $h_i, f_i\mapsto H_i$ define an  epimorphism, denoted by $\psi_n$, of ${\Bbb K}$--algebras  from $\mathcal{K}_n(x ,z,a)$     to $\mathcal{C}_n(\ell, m)$. Moreover, we have
$
\tau_n^{\prime} \circ \psi_n = \varpi_n.
$
\end{proposition}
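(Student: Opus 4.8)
The plan is to verify the two claims in Proposition \ref{tracetBMWwithBMW} separately: first that $\psi_n$ is a well-defined $\mathbb{K}$-algebra epimorphism under the stated specialization, and second that $\tau_n^\prime \circ \psi_n = \varpi_n$. For the first claim, I would check that the assignment $g_i \mapsto G_i$, $e_i \mapsto 1$, $h_i, f_i \mapsto H_i$ respects every defining relation of $\K_n$ listed in (\ref{b1})--(\ref{new14}). Under $e_i \mapsto 1$, all the tie relations (\ref{bt1})--(\ref{bt6}) and the new relations (\ref{new8}), (\ref{new9}) collapse to trivially true statements or to BMW relations, since a tie generator becomes the identity. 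The relations involving only $g_i$ and $h_i$ must map to the reduced BMW presentation relations; in particular relation (\ref{bmw4}) $g_ih_i = a^{-1}h_i$ maps to $G_iH_i = \ell^{-1}H_i$ with $\ell = a$, relation (\ref{bmw5}) maps to $H_iG_jH_i = \ell H_i$ with $\ell = a$, and the crucial skein-type relation (\ref{new14}) $g_i + g_i^{-1} = z(e_i + f_i)$ maps to $G_i + G_i^{-1} = m(1 + H_i)$ with $m = z$, since $e_i \mapsto 1$ and $f_i \mapsto H_i$. Surjectivity is immediate because the $G_i$ and $H_i$ are all hit. I would also record that $h_i$ and $f_i$ must map to the \emph{same} element $H_i$, which is consistent precisely because relation (\ref{new1}) gives $e_ih_i = h_i$, mapping to $H_i = H_i$, and the defining expression for $f_i$ forces $f_i \mapsto H_i$.

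For the trace identity, I would argue by induction on $n$ using the inductive characterization of both traces. The base case $n = 1$ is trivial since both sides send $1 \mapsto 1$. For the inductive step, because $\varpi_n$ is the \emph{unique} Markov trace satisfying (i)--(v) of Theorem \ref{Thtrace}, it suffices to verify that $\tau_n^\prime \circ \psi_n$ satisfies those same five axioms with the specialized parameters. Axioms (i) and (ii) (normalization and the trace property) transfer immediately since $\tau^\prime$ is itself a trace and $\psi_n$ is an algebra homomorphism. For axioms (iii)--(v), given $\alpha \in \K_n$, I would compute $\tau_{n+1}^\prime(\psi_{n+1}(\alpha g_n))$, and similarly for $\alpha h_n$, $\alpha f_n$, $\alpha e_n$, using (\ref{traceBMW}) and the Markov property of $\tau^\prime$. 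Since $\psi_{n+1}(\alpha g_n) = \psi_n(\alpha)G_n$ and $\psi_{n+1}(\alpha e_n) = \psi_n(\alpha)$, the computations reduce to the explicit formulas for $\tau^\prime_{n+1}(c\,G_n)$ and $\tau^\prime_{n+1}(c\,H_n)$, with $e_n$ contributing a factor coming from $\tau^\prime_{n+1}(c \cdot 1)$.

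The key arithmetic check is that the substitution $x = \frac{z}{a + a^{-1} - z}$ makes the scalar factors match. Concretely, axiom (iv) of $\varpi$ demands the factor $x$, and $\psi$ sends $h_n \mapsto H_n$, so I must confirm $\frac{m}{\ell + \ell^{-1} - m} = x$; substituting $\ell = a$, $m = z$ gives exactly $\frac{z}{a + a^{-1} - z} = x$. Likewise axiom (iii) demands the factor $x/a$, and I would check $\frac{m}{\ell(\ell + \ell^{-1} - m)} = \frac{x}{a}$, which again holds since it equals $\frac{1}{a}\cdot\frac{z}{a+a^{-1}-z}$. The one genuinely substantive verification is axiom (v), the factor $xy$ for $\alpha e_n$: here $\psi_{n+1}(\alpha e_n) = \psi_n(\alpha)$ sits in $\mathcal{C}_{n+1}$, so I must evaluate $\tau_{n+1}^\prime$ of an element of $\mathcal{C}_n$ viewed inside $\mathcal{C}_{n+1}$, which introduces the index-shift scalar $\frac{\ell + \ell^{-1} - m}{m}\cdot\frac{1}{?}$; matching this against $xy = x\big((a+a^{-1})z^{-1} - 1\big)$ is the computation I expect to be the main obstacle, since it combines the definition (\ref{y}) of $y$ with the inclusion $\mathcal{C}_n \hookrightarrow \mathcal{C}_{n+1}$.
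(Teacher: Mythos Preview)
Your approach is correct and is exactly the routine verification the paper has in mind; the paper's own treatment of this proposition is the single clause ``can be easily verified.''  The one place you flag as the main obstacle, axiom (v), resolves immediately once you compute the scalar: under the specialization $x = z/(a+a^{-1}-z)$ one has
\[
xy \;=\; \frac{z}{a+a^{-1}-z}\cdot\Bigl(\frac{a+a^{-1}}{z}-1\Bigr)
\;=\; \frac{z}{a+a^{-1}-z}\cdot\frac{a+a^{-1}-z}{z} \;=\; 1,
\]
and since the Birman--Wenzl trace is normalized so that $\tau_{n+1}'\vert_{\mathcal{C}_n}=\tau_n'$ (this is part of what $\tau_n'(1)=1$ for all $n$ entails), the required identity $\tau_{n+1}'(\psi_n(\alpha)) = xy\cdot \tau_n'(\psi_n(\alpha))$ is just $\tau_n'(\psi_n(\alpha))=\tau_n'(\psi_n(\alpha))$.
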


 Under the hypothesis of Proposition \ref{tracetBMWwithBMW} we have that the following  diagram is commutative:

 \begin{equation}\label{commutativediagram2}
\xymatrix{
EB_n \ar[r]^{\tilde{\epsilon}_n} \ar[d]_{\f_n} & \K_n \ar[d]^{\psi_n} \ar[dr]^{\varpi_n} &  \\
B_n \ar[r]_{\epsilon_n} & \mathcal{C}_n  \ar[r]_{\tau'_n}  & \Bbb K  }
\end{equation} where $\tilde{\epsilon}_n$ is the homomorphism from $EB_n$ to
$\mathcal{K}_n$ defined by mapping $\eta^n\sigma_i\mapsto e^ng_i$, and
  $e^n:= e_1\cdots e_{n-1}$.


\begin{proposition}\label{kauffanasL}
The Kauffman polynomial  $\widehat L$ can be obtained as a specialization  of the polynomial  $\widehat \L$.
More precisely, for  an oriented tied--link $\overrightarrow{D}$ whose
components are all tied together, we have
$$\widehat{\L}(\overrightarrow{D} )= \widehat{L}(\overrightarrow{D}^{\nsim}).$$
\end{proposition}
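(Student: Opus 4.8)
The plan is to derive the identity $\widehat{\L}(\overrightarrow{D}) = \widehat{L}(\overrightarrow{D}^{\nsim})$ purely algebraically, by tracking how both normalized invariants are expressed through their respective Markov traces and then invoking the factorization established in Proposition \ref{tracetBMWwithBMW}. First I would reduce to the braid picture: since $\overrightarrow{D}$ has all components tied together, by the discussion preceding equation (\ref{f}) it is braid-presented as $\overrightarrow{D} = \widehat{\eta^n\sigma}$ for some $\sigma \in B_n$, and correspondingly $\overrightarrow{D}^{\nsim} = \widehat{\sigma}$ via the isomorphism $\f_n$. Recall also $w(D) = {\rm exp}(\eta^n\sigma) = {\rm exp}(\sigma)$, so the two writhes agree.

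Next I would write each side via its trace formula. On the t--BMW side, using the Remark expressing $\widehat{\L}$ through $\varpi_n \circ \pi_n$, together with $\tilde{\epsilon}_n(\eta^n\sigma_i) = e^n g_i$, we get
$$
\widehat{\L}(\overrightarrow{D}) = \left(\tfrac{1}{x}\right)^{n-1} a^{{\rm exp}(\sigma)}\,(\varpi_n \circ \tilde{\epsilon}_n)(\eta^n\sigma).
$$
On the BMW side, formula (\ref{kauffmanpolynomial}) with $\ell = a$, $m = z$ gives
$$
\widehat{L}(\overrightarrow{D}^{\nsim}) = \left(\tfrac{\ell + \ell^{-1} - m}{m}\right)^{n-1} \ell^{w(D)} (\tau'_n \circ \epsilon_n)(\sigma).
$$
The plan is then to match these two expressions using the commutative diagram (\ref{commutativediagram2}). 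Under the specialization $x = \tfrac{z}{a + a^{-1} - z}$ of Proposition \ref{tracetBMWwithBMW}, the two prefactors coincide, since $\tfrac{1}{x} = \tfrac{\ell + \ell^{-1} - m}{m}$ and $a^{{\rm exp}(\sigma)} = \ell^{w(D)}$.

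The remaining and central step is to identify the trace values, i.e.\ to show $(\varpi_n \circ \tilde{\epsilon}_n)(\eta^n\sigma) = (\tau'_n \circ \epsilon_n)(\sigma)$. Here I would read off the commutative diagram: the relation $\tau'_n \circ \psi_n = \varpi_n$ gives $\varpi_n \circ \tilde{\epsilon}_n = \tau'_n \circ \psi_n \circ \tilde{\epsilon}_n$, and chasing the diagram one has $\psi_n \circ \tilde{\epsilon}_n = \epsilon_n \circ \f_n$ on $EB_n$. Indeed $\tilde{\epsilon}_n$ sends $\eta^n\sigma_i \mapsto e^n g_i$, then $\psi_n$ sends $e_i \mapsto 1$ and $g_i \mapsto G_i$, so $e^n g_i \mapsto G_i$, which is exactly $\epsilon_n(\f_n(\eta^n\sigma_i)) = \epsilon_n(\sigma_i) = G_i$. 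Thus $(\varpi_n \circ \tilde{\epsilon}_n)(\eta^n\sigma) = (\tau'_n \circ \epsilon_n)(\sigma)$, and combining with the matched prefactors yields the claim.

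The main obstacle I anticipate is the careful verification that $\psi_n \circ \tilde{\epsilon}_n = \epsilon_n \circ \f_n$ holds not just on generators but as monoid homomorphisms, and that the tied factor $e^n$ genuinely collapses under $\psi_n$ without disturbing the braid word; this is precisely the commutativity of the left square and upper triangle of (\ref{commutativediagram2}), which I would treat as established content and simply cite. A secondary point to check explicitly is the algebraic identity $x^{-1} = (\ell + \ell^{-1} - m)/m$ under the given specialization, which is immediate from $x = z/(a + a^{-1} - z)$ with $\ell = a$, $m = z$. Once these bookkeeping identifications are in place, the equality of the two invariants follows formally, and the proof is complete.
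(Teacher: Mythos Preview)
Your proposal is correct and follows essentially the same route as the paper's own proof: reduce to the braid picture $\overrightarrow{D}=\widehat{\eta^n\sigma}$, express $\widehat{\L}$ via $\varpi_n$ using (\ref{phi}), and then invoke the commutative diagram (\ref{commutativediagram2}) together with the factorization $\tau_n'\circ\psi_n=\varpi_n$ of Proposition~\ref{tracetBMWwithBMW} to collapse $(\varpi_n\circ\tilde{\epsilon}_n)(\eta^n\sigma)$ to $(\tau_n'\circ\epsilon_n)(\sigma)$, matching the prefactors under the specialization $x=z/(a+a^{-1}-z)$. Your explicit check that $\psi_n\circ\tilde{\epsilon}_n=\epsilon_n\circ\f_n$ on generators is exactly the commutativity of the left square, which the paper simply cites from the diagram.
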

\begin{proof}
    Theorem \ref{TH2} says  that $\widehat{\L}(\overrightarrow{D}) = a^{-w(D)}\L(D)$.   Let  $\eta^n\sigma\in EB_n$  whose closure is $\overrightarrow{D}$; then from (\ref{phi}) we have
$$
\widehat{\L}(\overrightarrow{D})
= a^{-w(D)}x^{1-n}\varpi(\tilde{\epsilon}_n (\eta^n\sigma)).
$$
Now, from the diagram above, we have:
\begin{eqnarray*}
\varpi(\tilde{\epsilon}_n(\eta^n\sigma))& = & (\varpi\circ \tilde{\epsilon_n})(\eta^n\sigma)\\
& = &
((\tau_n^{\prime}\circ \psi_n) \circ \tilde{\epsilon}_n) (\eta^n\sigma)\\
& = &
(\tau_n^{\prime}\circ (\psi_n \circ \tilde{\epsilon}_n)) (\eta^n\sigma)\\
& = &
(\tau_n^{\prime}\circ \epsilon_n \circ \f_n) (\eta^n\sigma)\\
& = &
(\tau_n^{\prime}\circ \epsilon_n) (\f_n (\eta^n\sigma)) = (\tau_n^{\prime}\circ \epsilon_n) (\sigma ).
\end{eqnarray*}
  Recall  that  $w(D)= {\rm exp}(\overrightarrow{D})$ and    $x=\frac{z}{a + a^{-1}-z }$.  Therefore:
$$
\widehat{\L}(\overrightarrow{D}) = a^{-w(D)}x^{1-n}\L(D)= a^{-{\rm exp}(\sigma)}\left( \frac{a + a^{-1}-z }{z}\right)^{n-1}
(\tau_n^{\prime}\circ \epsilon_n) (\sigma ) = \widehat{L}(\overrightarrow{D}^\nsim ),
$$
since     $a=\ell$ and $z= m$.
\end{proof}

\section{ Some  key examples}\label{Sec71}

We  give  here  an  example  of  a  pair of  unoriented  links distinguished  by  $\L$  and $\brla$.

Let L11n304  and L11n412 be  the   unoriented   link  diagrams  obtained  by  forgetting the orientation  in  the  link  diagrams shown in  Fig. \ref{pair1a} (see \cite{link}).
The  calculation  of  $\L$  on  these diagram  gives:

\smallbreak
$\L(L11n304)- \L(L11n412)= -z (-4 z^2 x^2 a^5+z x a^4-3 z^2 x a^3+z^3 x a^4+z^3 x a^2+z^2 x a+a^3 z^4 x-4 a^5 z^2 x
-a^6 z^3 x+a^5 z^4 x-x a^5-x a^3+a^4 z-5 z^2 x^2 a^3+z^4 x^2 a^3+z^4 x^2 a^5-z^3 x^2 a^4 -a^8 z^3 x-2 a^6 z^3 x^2+4 a^6 z x^2+7 z x^2 a^4+2 z x^2 a^2-a^8 z^3 x^2+2 a^7 z^2 x^2
-z x^2+a^9 z^2 x^2)/(x^2 a^5)$
\smallbreak
By  the  variable  change  $a:=-A^3$  and  $z:=A-1/A$   we  get:
\smallbreak
$\bla L11n304 \bra - \bla L11n412 \bra=
-(x^2+2 A^4 x+12 x^2 A^{22}+7 x^2 A^{20}+A^{30} x-A^{16}+2 x^2 A^2-5 x A^{10}-6 x A^{16}+7 A^{20} x-2 A^{14}+4 A^{28} x+6 A^{26} x+5 A^{24} x-7 x^2 A^{14}-7 x^2 A^{12}-6 x^2 A^{10}-4 x^2 A^8-A^{12}+x A^2-5 x A^{14}-3 A^8 x+3 x^2 A^{18}-2 x^2 A^{16}-5 x A^{12}+7 x^2 A^{28}+9 x^2 A^{26}+12 x^2 A^{24}+4 x^2 A^{30}+x^2 A^{32}-x^2 A^6+x^2 A^4+6 A^{22} x)/(A^{17} x^2).$

 \begin{figure}[H]
 \centering{
 \includegraphics[scale=0.9]{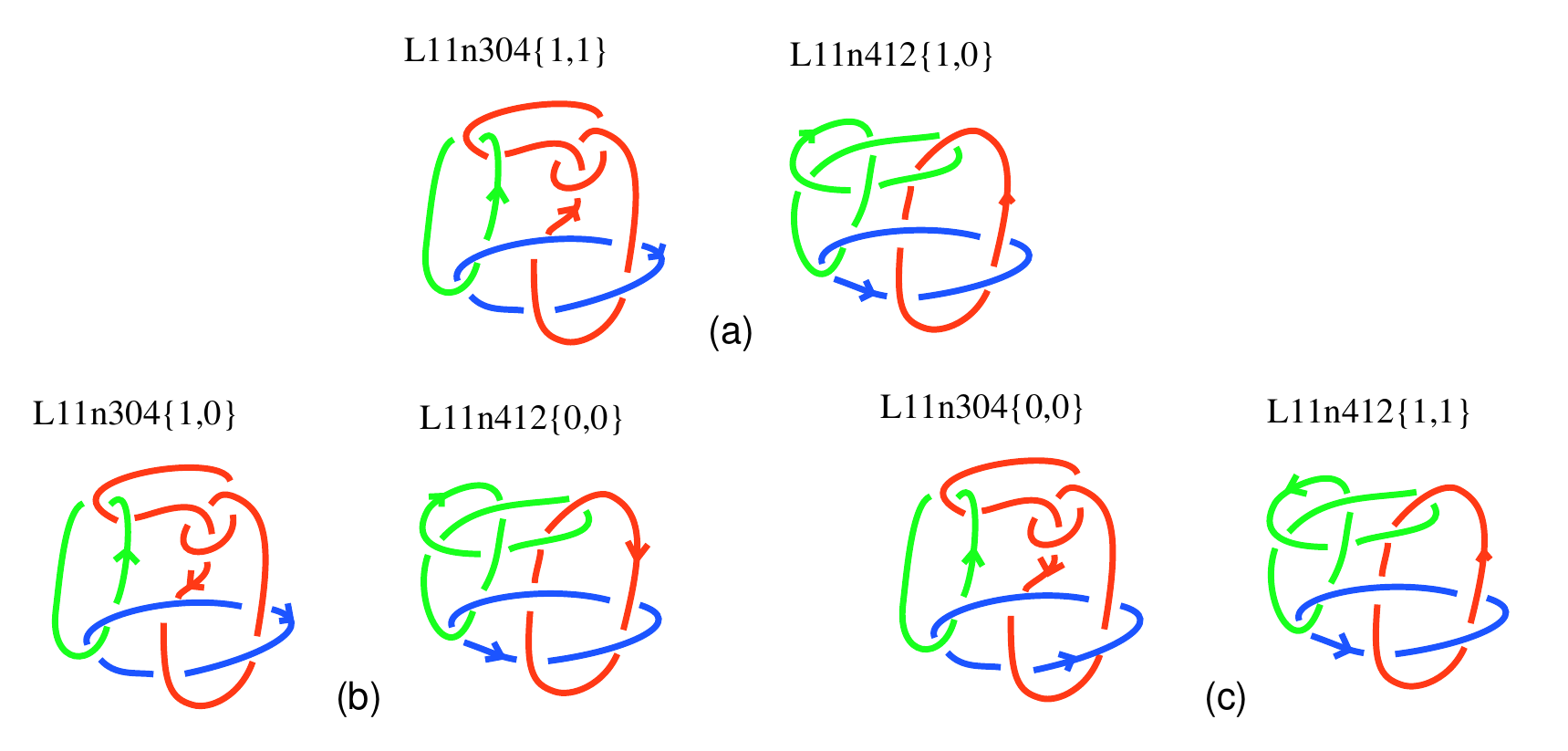}}
 \caption{ }\label{pair1a}
 \end{figure}

   Observe  that  the    oriented  links  of the  pair:
( L11n304\{1,1\}, \    L11n412\{1,0\}),  shown  in  Fig. \ref{pair1a}(a),  have  the  same  Homflypt polynomial,  and  Kauffman  polynomial (see \cite{link}).
The  same  is  true for  the  pairs    (L11n304 \{1,0\}, \ L11n412\{0,0\})  and   (L11n304 \{0,0\}, \  L11n412\{1,1\}),  shown  in  Fig.   \ref{pair1a}(b) and (c).

The  first pair  is  not  distinguished   neither by $\Theta$  nor  by $\F$,  the  others  are  not distinguished by $\Theta$ (see \cite{chljukala}).

  Since  the  writhes  of  the  oriented  diagrams in  each  pair  coincide, it  follows  from  the  calculations  above  that these  oriented  pairs  are    distinguished  by  the  generalized  Kauffman  polynomials   (\ref{Lor}) and (\ref{J}).

\vfill
\newpage

\section*{Appendix}\label{app}

The  following    tables   provide   the  value  of  $\L$  on  simple unoriented  tied  links diagrams,  shown  in  Fig. \ref{list}.
\[ \begin{array} {|l|l| }
   \hline
       &  \L  \\  \hline
   \LL1 & 1 \\   \hline
    \LL2 & (a+1/a)/z-1 \\ \hline
   \LL3 &    1/x\\ \hline
   \LL4 &   ( -a^2-1+a z+z^2 a^2+z^2 ) /( a z ) \\ \hline
   \LL5 &   (-a+z x a^2+z x)/(x a) \\ \hline
    \LL6 &    (-a^2-1+a z)^2/ ( z^2 a^2 ) \\ \hline
   \LL7 &    ( a^2+1-a  z)/(a z x)  \\ \hline
   \LL8 &    1/ x^2   \\ \hline
   \LL9 &    (a^2+1-a z-z^2 a^2-z^2)(-a^2-1+a  z) /(z^2 a^2)\\ \hline
    \LL10 &   ( -a^2-1+a z+z^2 a^2+z^2)/(a z x )  \\  \hline
    \LL11  &     (-a+zx  a^2+z x)(a^2+1-a  z)/(z x a^2)  \\ \hline
   \LL12 &   (  -a+z x a^2+zx)/(x^2 a)  \\  \hline
    \LL13  &  (a^4+2 a^2-2a^3 z+1-2 a z-3 z^2  a^2-2 a^4 z^2+2 z^3 a^3-2z^2+ \\
    &   +2z^3 a+z^4 a^4+2z^4 a^2+z^4)/(z^2 a^2) \\ \hline
    \LL14 &  (a^3+a-a^2 z-z^2  a^3-a z^2-z x  a^4-2 zx a^2+z^2x a^3+z^3x a^4+ \\
    &   +2 z^3x a^2-zx+z^2x a+z^3x)/(z x a^2)
   \\  \hline
    \LL15  &    (a^3+a-a^2 z-2 z x  a^4-4 z x  a^2+2 z^2 x  a^3-2 z x + \\
   &  +2 z^2 x  a+z^3 x  a^4+2 z^3 x  a^2+z^3 x )/(z x  a^2)
      \\ \hline
    \LL16 &    (a^2-2 z x  a^3-2 a z x +z^2 x ^2 a^4+2 z^2 x ^2 a^2+z^2 x ^2) /(x ^2 a^2) \\  \hline
     \LL17^+ &   (a^4+a^2-a^3 z-3 a^4 z^2-2 z^2 a^2+z^3 a^3+z^4 a^4+z^4 a^2+z^3 a+z^2)/(a^3 z) \\  \hline
   \LL17^- &    (a^2+1-a z-2 z^2 a^2-3 z^2+z^3 a+z^4 a^2+z^4+z^3 a^3+a^4 z^2)/(a z)  \\  \hline
   \LL18^+ &  (a^3-3 z x a^4-2 z x a^2+z^2 x a^3+z^3 x a^4+z^3 x a^2+z^2 x a+z x)/(x a^3)   \\  \hline
   \LL18^- & (a-2 z x a^2-3 z x+z^2 x a+z^3 x a^2+z^3 x+z^2 x a^3+z x a^4)/(x a)   \\  \hline
   \LL19^+ &  (-2 a^3-a+a^2 z+z^2 a^3+a z^2+z)/(a^2)   \\  \hline
   \LL19^- &   (-2-a^2+a z+z^2 a^2+z^2+a^3 z)/a  \\  \hline
   \LL20 &  (-1-a^3 z-a z+2 z^2 a^2+z^3 a^3+z^3 a+z^2-a^4-a^2+a^4 z^2)/(a^2)  \\  \hline
   \LL21^+ &    (a^3-a^2 z-4 z^2 a^3-a^4 z^3+3 z^4 a^3+z^5 a^4+z^5 a^2+2 z^4 a+ \\            &   +z^3+a-2 a^5 z^2-2 a z^2+z^4 a^5)/(z a^3)  \\  \hline
   \LL21^-&    (a^2-a^3 z-4 z^2 a^2-z^3 a+3 z^4 a^2+a^3 z^5+a z^5+2 z^4 a^4+ \\       &        +a^5 z^3+a^4-2 a^4 z^2-2 z^2+z^4)/(z a^2)  \\  \hline
   \LL22^+ &    (-z^2 x a^4+3 z^3 x a^3+z^4 x a^4+z^4 x a^2+2 z^3 x a+z^2 x+ \\      &       +a^2-2 z x a^5-4 z x a^3-2 a z x+z^3 x a^5)/(x a^3)  \\  \hline
   \LL22^- &    (-z^2 x a+3 z^3 x a^2+a^3 z^4 x+a z^4 x+2 z^3 x a^4+a^5 z^2 x+\\     &  +  a^3-2 z x a^4-4 z x a^2-2 z x+z^3 x)/(x a^2)  \\  \hline
   \end{array} \]
  \vfill \newpage
\vfill \newpage

\[ \begin{array} {|l|l| }
   \hline
       &  \L  \\  \hline
\LL23^+ & (-1+a^4+a^2-2 a^3 z-2 a^4 z^2-a^2 z^2+2 a^3 z^3+z^4 a^4+a^2 z^4+a z^3+z^2+\\
& -2 a^5 z+a^5 z^3)/(a^3) \\  \hline
     \LL23^- &  (-a^5+a^3+a-2 a^2 z-a^3 z^2-2 a z^2+2 a^2 z^3+a^3 z^4+a z^4+a^4 z^3+a^5 z^2+\\
     & -2 z+z^3)/(a^2) \\  \hline
   \LL24^+ &  (3 a^5+2 a^3-2 a^4 z-4 a^5 z^2-3 a^3 z^2-a^2 z+z+a^4 z^3+z^4 a^5+a^3 z^4+a^2 z^3+\\
   &+a z^2)/(a^4)  \\  \hline
   \LL24^- &   (3+2 a^2-2 a z-3 a^2 z^2-4 z^2-a^3 z+a^5 z+a z^3+a^2 z^4+z^4+a^3 z^3+a^4 z^2)/a \\  \hline
   \LL25^+ &  (-a^6-a^4+a^5 z+6 a^6 z^2+4 a^4 z^2-3 a^5 z^3-5 a^6 z^4-4 z^4 a^4-2 a^3 z^3+\\
   & +a^2 z^2+z^2+a z^3+z^5 a^5+z^6 a^6+z^6 a^4+a^3 z^5+a^2 z^4)/(a^5 z)  \\  \hline
   \LL25^- &  (-a^2-1+a z+4 a^2 z^2+6 z^2-3 a z^3-4 a^2 z^4-5 z^4-2 a^3 z^3-a^4 z^2+\\
   &  +a^6 z^2+a^5 z^3+a z^5+a^2 z^6+z^6+a^3 z^5+z^4 a^4)/(a z)  \\  \hline
   \LL26^+ &    (-a^5+6 a^6 z x+4 z x a^4-3 a^5 z^2 x-5 a^6 z^3 x-4 z^3 x a^4-2 z^2 x a^3-z x a^2+\\
  & +z x+z^2 x a+a^5 z^4 x+z^5 x a^6+z^5 x a^4+a^3 z^4 x+z^3 x a^2)/(x a^5)\\  \hline
   \LL26^- & (-a+4 z x a^2+6 z x-3 z^2 x a-4 z^3 x a^2-5 z^3 x-2 z^2 x a^3-z x a^4+\\
   & +a^6 z x+a^5 z^2 x+a z^4 x+z^5 x a^2+z^5 x+a^3 z^4 x+z^3 x a^4)/(x a) \\  \hline
   \LL27^+ &   (-1+2 a^5 z+2 a^3 z-4 a^4 z^2-3 a^5 z^3-2 a^3 z^3+z^2+2 z^4 a^4+z^5 a^5+\\
   & +a^3 z^5+a^2 z^4+a z^3+a^6+a^4-3 a^6 z^2+a^6 z^4)/(a^4)  \\  \hline
   \LL27^-&    (-a^6+2 a z+2 a^3 z-4 a^2 z^2-2 a^3 z^3-3 a z^3+a^6 z^2+2 a^2 z^4+\\
   & +a^3 z^5+a z^5+z^4 a^4+a^5 z^3+a^2+1-3 z^2+z^4)/(a^2)  \\  \hline
   \LL28^+ &  (-1+3 a^4 z^2+3 a^7 z+3 a^5 z+a z^3+a^2 z^4+z^2-a^4-a^6+4 a^6 z^2-a^3 z^3-3 z^4 a^4+\\
   & +a^3 z^5-6 a^5 z^3-4 a^6 z^4-4 a^7 z^3+a^7 z^5+2 z^5 a^5+z^6 a^6+z^6 a^4)/(a^5)  \\  \hline
   \LL28^- &     (-a^7-a+3 z+z^5-a^3-4 z^3+3 a^3 z^2+4 a z^2+3 a^2 z+z^4 a^5-a^4 z^3+\\
   & -6 a^2 z^3-3 a^3 z^4+z^5 a^4+2 a^2 z^5-4 a z^4+a^7 z^2+a^6 z^3+a^3 z^6+a z^6)/(a^2)  \\  \hline
   \end{array} \]

 \begin{figure}[H]
 \centering \includegraphics[scale=0.8]{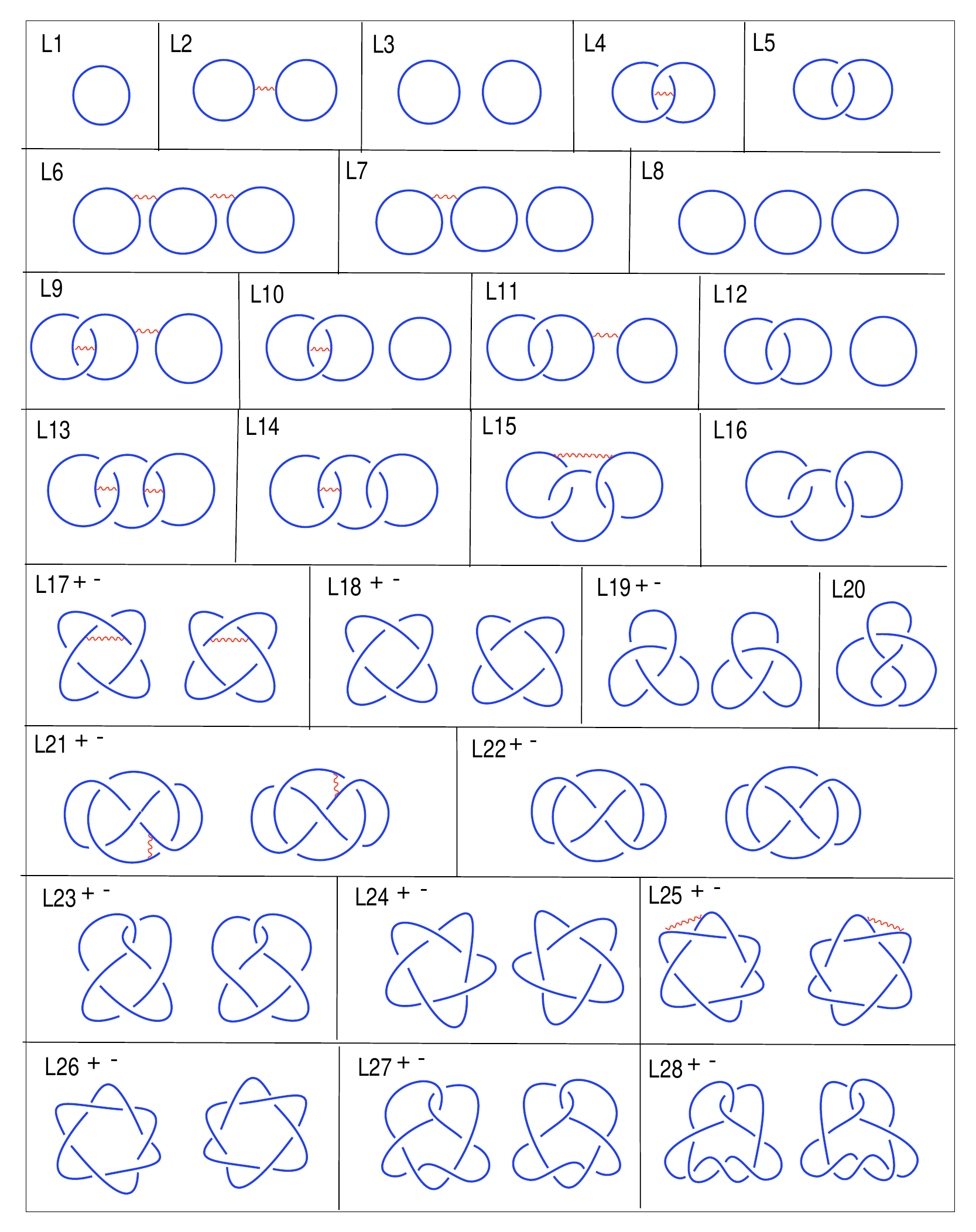}
 \caption{ }\label{list}
 \end{figure}

\end{document}